\newtheorem{theorem}{Theorem}[section]
\newtheorem{lemma}[theorem]{Lemma}
\newtheorem{proposition}[theorem]{Proposition}
\newtheorem{corollary}[theorem]{Corollary}
\theoremstyle{definition}
\newtheorem{definition}[theorem]{Definition}
\newtheorem{example}[theorem]{Example}
\newtheorem{question}[theorem]{Question}
\newtheorem{conjecture}[theorem]{Conjecture}
\newtheorem{remark}[theorem]{Remark}
\newcommand{\End}{\text{End}}
\newcommand{\Hom}{\text{Hom}}
\newcommand{\C}{\mathcal{C}}
\newcommand{\ot}{\otimes}
\newcommand{\one}{\mathds{1}}
\newcommand{\lk}{\ar@{-}}%for links in xymatrix
\numberwithin{equation}{section}
\begin{document} 

\title{On a necessary condition for unitary categorification of fusion rings}

\begin{abstract}
In \cite{LPW} Liu, Palcoux and Wu proved a remarkable necessary condition for a fusion ring to admit a unitary categorification, by constructing invariants of the fusion ring that have to be positive if it is unitarily categorifiable. The main goal of this note is to provide a somewhat more direct proof of this result. 
In the last subsection we discuss integrality properties of the Liu-Palcoux-Wu invariants. 
\end{abstract}

\author{Pavel Etingof}
\address{Department of Mathematics, Massachusetts Institute of Technology,
Cambridge, MA 02139, USA}
\email{etingof@math.mit.edu}

\author{Dmitri Nikshych}
\address{Department of Mathematics and Statistics,
University of New Hampshire,  Durham, NH 03824, USA}
\email{dmitri.nikshych@unh.edu}

\author{Victor Ostrik}
\address{Department of Mathematics,
University of Oregon, Eugene, OR 97403, USA}
\address{Laboratory of Algebraic Geometry,
National Research University Higher School of Economics, Moscow, Russia}
\email{vostrik@uoregon.edu}

\maketitle

\centerline{\bf To the memory of Earl J. Taft} 

\tableofcontents

\section{Introduction}

In \cite{LPW} Liu, Palcoux and Wu proved a remarkable necessary condition for a fusion ring to be unitarily categorifiable, which can be used to disqualify many new fusion rings from having such a categorification. They did so by constructing invariants of the fusion ring that have to be positive if it is the Grothendieck ring of a unitary fusion category. The first goal of this artcile is to provide a somewhat more direct proof of this criterion, using the notion of a formal codegree and the Drinfeld center of the underlying category, which is done in Section 4, after discussing preliminaries in Sections 2,3. Thus Section 4 does not contain any essentially new results, it is just our attempt to give an exposition of the result of \cite{LPW} in a somewhat more algebraic language.\footnote{We note that the paper \cite{LPW} contains a nice diagrammatic proof of this result, see the proof of Proposition 7.7 there.}

Our second goal is to discuss integrality properties of the Liu-Palcoux-Wu invariants and their relation to the generalized Kaplansky 6th conjecture stating that in any spherical fusion category, dimensions of objects divide the dimension of the category. This is implemented in Section 5. 

{\bf Acknowledgements.} We are very grateful to Sebastien Palcoux for useful discussions. P. E.'s work was partially supported by the NSF grant DMS - 1916120. The work of D.~N. was partially supported by the NSF grant DMS-1800198. The work of V. O. was partially supported by the NSF grant DMS-1702251 and by the Russian Academic Excellence Project `5-100'.

\section{Fusion rings}

Let $A$ be a fusion ring with a finite basis $\{b_i\}$ (where $b_0=1$) and let  $\tau: A\to \Bbb Z$ be the trace function given by 
$\tau(\sum_i \lambda_ib_i):=\lambda_0$.
There is an inner product
 $(a,b):=\tau(ab^*)$, for $a,b\in A$, with respect to which the basis $b_i$ is orthonormal. Let $N_{ijm}:=(b_ib_j,b_m)$ be the fusion coefficients
and let $d_j$ be the Frobenius-Perron dimensions of the basis elements $b_j$. Recall that $N_{ijm}=N_{jm^*i^*}=N_{m^*ij^*}=N_{j^*i^*m^*}$ and $d_j=d_{j^*}$. For more details on fusion rings and Frobenius-Perron dimensions we refer the reader to \cite[Chapter 3]{EGNO}.

We now give a somewhat shorter proof of \cite[Proposition 8.1]{LPW}. 
(This proposition will not be used below.)  

\begin{proposition} (\cite{LPW}, Proposition 8.1) 
\begin{enumerate}
\item[(i)] $\sum_m N_{ijm}^2\le {\rm min}(d_i^2,d_j^2)$.
\item[(ii)] $N_{ijm}\le d_id_j/d_m$; hence $N_{ijm}\le d_m(d_i/d_j)^{\frac{2-t}{t}}$ for any $t\ge 1$.
\item[(iii)] $N_{ijm}\le {\rm min}(d_i,d_j,d_m)$.
\item[(iv)] $\sum_m N_{i_1i_2m}N_{i_3i_4m}\le d_{i_p}d_{i_q}$ for any distinct $1\le p,q\le 4$. 
\end{enumerate}
\end{proposition}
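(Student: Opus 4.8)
The plan is to base everything on a single associativity/Frobenius-reciprocity identity for the structure constants, combined with the fact that multiplication by a basis element $b_i$ expands Frobenius--Perron dimensions multiplicatively: $b_i b_j = \sum_m N_{ijm} b_m$ with $\sum_m N_{ijm} d_m = d_i d_j$, and the reciprocity relations $N_{ijm} = N_{jm^*i^*} = N_{m^*ij^*} = N_{j^*i^*m^*}$ recalled in the excerpt. The key auxiliary computation is $\sum_m N_{ijm} N_{i'j'm} = (b_ib_j, b_{i'}b_{j'}) = (b_{i^*}b_{i'}, b_jb_{j'}^*) = \sum_\ell N_{i^*i'\ell} N_{j^* j' \ell^*}$ (and its other reassociations), obtained just by expanding the inner product $\tau\!\left((b_ib_j)(b_{i'}b_{j'})^*\right)$ in two ways. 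Everything below is extracted from this.

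For (i): apply the identity with $(i',j')=(i,j)$ to get $\sum_m N_{ijm}^2 = \sum_\ell N_{i^*i\ell} N_{j^*j\ell}$. Now bound $N_{i^*i\ell} \le (b_{i^*}b_i, b_\ell)$ crudely and use that $\sum_\ell N_{i^*i\ell} d_\ell = d_i^2$ together with $d_\ell \ge 1$ to control the sum; more cleanly, note $\sum_\ell N_{i^*i\ell} N_{j^*j\ell} \le \sum_\ell N_{i^*i\ell} d_\ell = d_i^2$ after checking $N_{j^*j\ell} \le d_\ell$ — which is exactly a special case of (iii), so (iii) should be proved first, or at least the inequality $N_{abc}\le d_c$ should be isolated as a preliminary. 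Symmetry in $i \leftrightarrow j$ gives the minimum.

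For (iii): the inequality $N_{ijm}\le d_m$ follows since $N_{ijm} \le N_{ijm} d_m/d_m \le (\sum_{m'} N_{ijm'} d_{m'})/d_m$... no — rather, $N_{ijm} \le N_{ijm}$ trivially, but the clean route is $N_{ijm}^2 \le \sum_{m'} N_{ijm'}^2 \le d_m^2$ once (i) is known, giving $N_{ijm}\le \min(d_i,d_j)$; and $N_{ijm}\le d_m$ comes from writing $N_{ijm} = N_{jm^*i^*}$ — no, better: $N_{ijm} = N_{m^*ij^*} \le \min(d_{m^*}, d_i)$ by the previous bound applied to the reassociated coefficient, hence $N_{ijm}\le d_m$. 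So (iii) reduces to (i) plus reciprocity. For (ii): $N_{ijm} d_m \le \sum_{m'} N_{ijm'} d_{m'} = d_i d_j$ gives $N_{ijm}\le d_id_j/d_m$ immediately; the consequence $N_{ijm} \le d_m (d_i/d_j)^{(2-t)/t}$ for $t \ge 1$ then follows by interpolating between $N_{ijm}\le d_id_j/d_m$ (the $t=1$ case, rewritten) and $N_{ijm}\le d_jd_i/d_m$ with the roles swapped, or more precisely by combining $N_{ijm}\le d_id_j/d_m$ with $N_{ijm}\le d_m$ (from (iii)): taking the weighted geometric mean $N_{ijm} = N_{ijm}^{1/t} N_{ijm}^{(t-1)/t} \le (d_id_j/d_m)^{1/t}(d_m)^{(t-1)/t}$, and noting $d_id_j \le \max(d_i,d_j)^2$... this needs care with which of $d_i,d_j$ is larger, so I would state it as the geometric-mean interpolation and verify the exponent bookkeeping. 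For (iv): apply the master identity to $\sum_m N_{i_1i_2m}N_{i_3i_4m}$, reassociate to whichever pairing isolates the pair $(p,q)$, and bound the resulting sum $\sum_\ell N_{\cdot\cdot\ell}N_{\cdot\cdot\ell} \le \sum_\ell N_{\cdot\cdot\ell} d_\ell = d_{i_p} d_{i_q}$ using (iii) on the other factor.

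The main obstacle I anticipate is the bookkeeping in part (iv): there are three essentially different ways to reassociate $\sum_m N_{i_1i_2m}N_{i_3i_4m}$ (grouping $\{1,3\},\{1,4\},\{2,3\},\ldots$), and one must check that for each of the six choices of distinct $p,q$ there is a reassociation producing a sum over $\ell$ of products $N_{(\ast)\ell}N_{(\ast)\ell^*}$ in which one factor can be bounded by $d_\ell$ and the remaining sum telescopes via $\sum_\ell N_{i_p i_q \ell}d_\ell = d_{i_p}d_{i_q}$ — using the reciprocity relations to move stars around. The rest is routine once the preliminary lemma $N_{abc}\le d_c$ and the master identity are in place.
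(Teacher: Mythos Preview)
Your plan contains a circularity that you flag but never actually resolve. You need the ``preliminary'' inequality $N_{abc}\le d_c$ to run your argument for (i), and then you propose to deduce (iii) from (i); but $N_{abc}\le d_c$ \emph{is} part of (iii), and nowhere do you give an independent proof of it. A clean escape is already implicit in your treatment of the first half of (ii): from $N_{ijm}\le d_id_j/d_m$ and the reciprocity relations you also get $N_{ijm}\le d_md_i/d_j$ and $N_{ijm}\le d_md_j/d_i$; the geometric mean of any two of these three yields $N_{ijm}\le d_i$, $\le d_j$, $\le d_m$ respectively, proving (iii) directly from (ii) and breaking the loop. Incidentally, it is these same two cyclic variants --- not $d_id_j/d_m$ and $d_m$ --- that one should interpolate for the ``hence'' clause in (ii): $(d_md_i/d_j)^{1/t}(d_md_j/d_i)^{1-1/t}=d_m(d_i/d_j)^{(2-t)/t}$. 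Your proposed interpolation between $d_id_j/d_m$ and $d_m$ gives $(d_id_j)^{1/t}d_m^{(t-2)/t}$, which is not the stated bound.

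The paper takes a rather different route that avoids all of this combinatorics. The key observation is that $N_{ijm}$ is a matrix entry, in the orthonormal basis $\{b_k\}$, of the left-multiplication operator $L(b_i)$ (and by reciprocity also of $R(b_j)$, $L(b_{m^*})$, etc.), whose operator norm is the Frobenius--Perron dimension $d_i$. This gives (iii) in one line. For (i) the paper writes $\sum_m N_{ijm}^2=(b_ib_j,b_ib_j)=(b_{i^*}b_ib_j,b_j)\le\operatorname{FPdim}(b_{i^*}b_i)=d_i^2$, a diagonal entry of the positive operator $L(b_i)^{*}L(b_i)$. For (iv) it identifies $\sum_m N_{i_1i_2m}N_{i_3i_4m}=(b_{i_3^*}b_{i_1}b_{i_2},b_{i_4})=(b_{i_1}b_{i_2}b_{i_4^*},b_{i_3})$ as a matrix entry of each of six operators of the form $L(\cdot)L(\cdot)$, $R(\cdot)R(\cdot)$, or $L(\cdot)R(\cdot)$, bounded by the product of their norms $d_{i_p}d_{i_q}$; so all six pairs $\{p,q\}$ drop out simultaneously, with none of the reassociation bookkeeping you were anticipating. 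Your master-identity approach can be made to work once the circularity above is repaired, but the operator-norm viewpoint is shorter and handles (iii) and (iv) uniformly.
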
 
\begin{proof} 
(i) $\sum_m N_{ijm}^2=\sum_m (b_ib_j,b_m)^2=(b_ib_j,b_ib_j)=(b_{i^*}b_ib_j,b_j)\le {\rm FPdim}(b_{i^*}b_i)=d_i^2$.
Similarly, $\sum_m N_{ijm}^2 \leq d_j^2$.

(ii) The first statement follows from the equality $\sum_m N_{ijm}d_m=d_id_j$. Using the symmetry properties of $N_{ijm}$, this implies that $N_{ijm}\le d_md_i/d_j$ and $N_{ijm}\le d_md_j/d_i$. This implies the second statement, since for $t\ge 1$ the number $\frac{2-t}{t}$ varies between $1$ and $-1$. 

(iii) $N_{ijm}$ are matrix elements of the matrix of multiplication by $b_i$ 
in the orthonormal basis $b_j$, so they do not exceed the norm $d_i$ of this matrix. The rest follows from the symmetry properties of $N_{ijm}$. 

(iv) $\sum_m N_{i_1i_2m}N_{i_3i_4m}=\sum_m N_{i_1i_2m}N_{i_3^*mi_4}=(b_{i_3^*}b_{i_1}b_{i_2},b_{i_4})=(b_{i_1}b_{i_2}b_{i_4^*},b_{i_3})$. 
This is a matrix element of each of the following operators on $A$: $L(b_{i_3^*})L(b_{i_1})$; $R(b_{i_1})R(b_{i_2})$; $L(b_{i_3^*})R(b_{i_2})$; 
$L(b_{i_1})L(b_{i_2})$; $R(b_{i_2})R(b_{i_4^*})$; $L(b_{i_1})R(b_{i_4^*})$, where $L$ denotes left 
multiplication and $R$ right multiplication. 
Thus it is at most the norms of these operators, as desired. 
\end{proof} 

%%%%%%%%%%%%%%%%%%%%%%%%%%%%%%%%%%%%%%%%%%%%%%%%%%%%%%%%%%%%%%%%%%%%%%%%%%%%
\section{Formal codegrees and the Drinfeld center} 
Recall that $A_{\Bbb C} =A \ot_\mathbb{Z} \mathbb{C}$ is a $*$-algebra. 
Consider an irreducible representation \linebreak $(V,\rho: A_{\Bbb C}\to \End V)$ of $A_{\Bbb C}$ (which we will often shortly denote just by $\rho$). The {\bf formal codegree} $\alpha_\rho$ of $\rho$ is the eigenvalue of the central element $z_\rho:=\sum_i {\rm Tr}(\rho(b_i))b_i^*$ on $\rho$, an algebraic integer (the eigenvalues of this element on all the other irreducible representations are $0$), see e.g. \cite[Lemma 2.3]{O}.  
Thus
$\tau(z_\rho)=\dim \rho=\alpha_\rho^{-1}{\rm Tr} \rho(z_\rho)$. Hence 
$$
\tau(a)=\sum_\rho \alpha_\rho^{-1} {\rm Tr}\rho(a),\ a\in A. 
$$
In particular, $\alpha_\rho>0$ for all $\rho$, see also \cite[Remark 2.12]{O2}.

Let $v_1,v_2\in V$, $f_1,f_2\in V^*$. 

\begin{lemma}\label{matrel} We have 
$$
\sum_i f_1(\rho(b_i)v_1)f_2(\rho(b_i^*)v_2)=\alpha_\rho f_2(v_1)f_1(v_2).
$$
\end{lemma}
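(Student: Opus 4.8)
The plan is to identify the tensor $\sum_i f_1(\rho(b_i)v_1)f_2(\rho(b_i^*)v_2)$ as (a matrix coefficient of) an intertwiner of $A_{\CC}$-representations and then pin it down using Schur's lemma together with the trace identity from the previous paragraph. Concretely, fix $v_1\in V$ and $f_2\in V^*$ and consider the linear map $\Phi: V\to V$ defined by $\Phi(w):=\sum_i \rho(b_i)v_1\, f_2(\rho(b_i^*)w)$; equivalently $\Phi=\sum_i \rho(b_i)\, |v_1\rangle\langle f_2|\, \rho(b_i^*)$, where $|v_1\rangle\langle f_2|$ denotes the rank-one operator $w\mapsto f_2(w)v_1$. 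The quantity in the statement is $f_1(\Phi(v_2))$, so it suffices to show $\Phi=\alpha_\rho f_2(v_1)\,\id_V$.

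First I would check that $\Phi$ is $A_{\CC}$-linear, i.e. commutes with $\rho(a)$ for all $a\in A$. The key computational input is the identity $\sum_i \rho(a b_i)\otimes b_i^* = \sum_i \rho(b_i)\otimes b_i^* a$ inside $\End(V)\otimes A$, which follows from the fact that, writing $ab_i=\sum_m N(a)_{im}b_m$ in the basis, the symmetry/duality relations among the structure constants (specifically $(ab_i,b_m)=(b_i,a^*b_m)$ using orthonormality of the basis) let one transfer the left multiplication past the tensor factor. Using this twice — once to move $\rho(a)$ past the $\rho(b_i)$ factor and once to move $a$ through $b_i^*$ on the other side — shows $\rho(a)\Phi=\Phi\rho(a)$. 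Since $\rho$ is irreducible, Schur's lemma gives $\Phi=\lambda\,\id_V$ for a scalar $\lambda$.

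To compute $\lambda$, I would take the trace of $\Phi$: on one hand $\operatorname{Tr}\Phi=\lambda\dim\rho$, and on the other hand $\operatorname{Tr}\Phi=\sum_i \operatorname{Tr}\big(\rho(b_i)|v_1\rangle\langle f_2|\rho(b_i^*)\big)=\sum_i f_2\big(\rho(b_i^*)\rho(b_i)v_1\big)=f_2\big(\rho(z)v_1\big)$ where $z:=\sum_i b_i^* b_i$... — but it is cleaner to instead evaluate $\langle f_2|\Phi$ directly, or better, to apply $f_2$ to $\Phi(v_1)$-type expressions. Most efficiently: take the trace and rearrange as $\sum_i f_2(\rho(b_i^* b_i)v_1)$; note $\sum_i b_i^* b_i$ is not obviously $z_\rho$, so instead I would use $\operatorname{Tr}(\rho(b_i)|v_1\rangle\langle f_2|\rho(b_i^*))=\operatorname{Tr}(\rho(b_i^*)\rho(b_i)|v_1\rangle\langle f_2|)$ and recognize that what we actually want is obtained by pairing with $z_\rho=\sum_i\operatorname{Tr}(\rho(b_i))b_i^*$. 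The clean route: since $\Phi=\lambda\id$, also $\sum_i \operatorname{Tr}(\rho(b_i))\cdot\langle f_2|v_1\rangle^{-1}\cdots$; rather, apply $f_2(\Phi v_1)=\lambda f_2(v_1)$ while computing $f_2(\Phi v_1)=\sum_i f_2(\rho(b_i)v_1)f_2(\rho(b_i^*)v_1)$ — not yet a trace. The decisive identity is: summing the stated formula's left side against nothing but using the central element, one gets $\sum_i \operatorname{Tr}(\rho(b_i))\rho(b_i^*)=\rho(z_\rho)=\alpha_\rho\id_V$ by definition of the formal codegree. Applying this: $\operatorname{Tr}_V\Phi$, after inserting a resolution of identity and using $\sum_i f_2(\rho(b_i^*)(\cdot))\rho(b_i)v_1$, collapses via $\sum_i\rho(b_i^*)\otimes\rho(b_i)$ acting and the definition of $z_\rho$ to give $\operatorname{Tr}\Phi=\alpha_\rho f_2(v_1)$, hence $\lambda=\alpha_\rho f_2(v_1)/\dim\rho$ — which is not quite the claim, so the correct normalization must come from a more careful trace, and reconciling these constants is the main obstacle.

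The main obstacle, then, is bookkeeping the scalar precisely: one must be careful that $z_\rho=\sum_i\operatorname{Tr}(\rho(b_i))b_i^*$ acts by $\alpha_\rho$ and relate this to $\sum_i\rho(b_i)X\rho(b_i^*)$ for a rank-one $X$. The slick resolution is to avoid traces of $\Phi$ altogether and instead evaluate $\Phi$ at a vector by first establishing, via the same Schur argument applied with $v_1$ and $f_2$ varying, that the bilinear-in-$(v_1,f_2)$ and bilinear-in-$(v_2,f_1)$ expression must be proportional to $f_2(v_1)f_1(v_2)$, the unique such invariant pairing; the proportionality constant $c$ is then computed once and for all by specializing, e.g. summing over an orthonormal-type configuration or taking $f_1=f_2$, $v_1=v_2$ running over a basis, which turns the left side into $\sum_i\operatorname{Tr}(\rho(b_i)\rho(b_i^*))$ paired appropriately and directly produces $c=\alpha_\rho$ from $\tau(z_\rho)=\dim\rho=\alpha_\rho^{-1}\operatorname{Tr}\rho(z_\rho)$ — i.e. from $\operatorname{Tr}\rho(z_\rho)=\alpha_\rho\dim\rho$. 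I expect the write-up to be short once the intertwiner property and the correct specialization for extracting $\alpha_\rho$ are chosen compatibly.
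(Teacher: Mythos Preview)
Your Schur's-lemma strategy is a legitimate route, but the proposal as written does not finish the proof: the scalar $\lambda$ is never pinned down, and in your one concrete attempt you arrive at the wrong value $\alpha_\rho f_2(v_1)/\dim\rho$. The missing fact is that the central element $\sum_i b_i^* b_i$ (which is \emph{not} $z_\rho$) acts on $V$ by the scalar $\alpha_\rho\dim V$, not by $\alpha_\rho$. Once you know this, your trace computation gives ${\rm Tr}\,\Phi=f_2\bigl(\rho(\sum_i b_i^* b_i)v_1\bigr)=\alpha_\rho(\dim V)f_2(v_1)$ and hence $\lambda=\alpha_\rho f_2(v_1)$, as required. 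One clean way to see the eigenvalue: in the Wedderburn decomposition $A_\CC\cong\prod_{\rho'}\End(V_{\rho'})$ the form $(\,\cdot\,,\,\cdot\,)$ restricts to $\alpha_{\rho'}^{-1}$ times the Hilbert--Schmidt form on each block, so $\{\sqrt{\alpha_{\rho'}}\,E_{kl}^{\rho'}\}$ is another orthonormal basis; since $\sum_\alpha e_\alpha e_\alpha^{*}$ is invariant under unitary change of orthonormal basis, one gets $\sum_i b_i b_i^{*}=\sum_{\rho'}\alpha_{\rho'}\sum_{k,l}E_{kl}^{\rho'}E_{lk}^{\rho'}=\sum_{\rho'}\alpha_{\rho'}(\dim V_{\rho'})\,\id_{V_{\rho'}}$.

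The paper's proof is different and shorter, avoiding Schur's lemma altogether. It first uses surjectivity of $\rho$ to reduce the matrix-coefficient identity to the trace identity
\[
\sum_i {\rm Tr}\,\rho(a_1 b_i)\,{\rm Tr}\,\rho(b_i^{*}a_2)=\alpha_\rho\,{\rm Tr}\,\rho(a_1 a_2),
\]
by choosing $a_1,a_2$ with $\rho(a_j)$ rank one. Then it observes that if $a_2$ is taken (without loss of generality) in the $\rho$-isotypic component of $A_\CC$, one has $(a_2,b_i)=\tau(a_2 b_i^{*})=\alpha_\rho^{-1}{\rm Tr}\,\rho(b_i^{*}a_2)$, so the trace identity is exactly $\alpha_\rho$ times the tautological expansion $\sum_i(a_2,b_i)\,{\rm Tr}\,\rho(a_1 b_i)={\rm Tr}\,\rho(a_1 a_2)$. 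Your approach trades this one-line reduction for an intertwiner argument plus an auxiliary eigenvalue computation; both work, but the paper's is more economical.
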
 

\begin{proof} It suffices to show that for any $a_1,a_2\in A$ we have 
$$
\sum_i {\rm Tr}\rho(a_1b_i){\rm Tr}\rho(b_i^*a_2)=\alpha_\rho {\rm Tr}\rho(a_1a_2); 
$$
then we can take $a_1,a_2$ such that $\rho(a_1)=v_1\otimes f_1^*$, $\rho(a_2)=v_2\otimes f_2^*$ in $\End V$, which gives the desired statement. 
But this is equivalent to the obvious relation
$$
\sum_i {\rm Tr}\rho(a_1b_i)(a_2,b_i)={\rm Tr}\rho(a_1a_2), \qquad a_1,a_2\in A.
$$
\end{proof} 

%%%%%%%%%%%%%%%%%%%%%%%%%%%%%%%%%%%%%%%%%%%%%%%%%%%%%%%%%%%%%%%%%%%%%%%%%%%%%%%%%%

Suppose now that $\C$ is a spherical fusion category that categorifies  a fusion ring $A$. 
Recall that irreducible (unitary) representations $V$ of $A_{\Bbb C}$ have the form 
$V_Z=\Hom_\C(\one,Z)$ where $Z$ is a simple object of the Drinfeld center 
$\mathcal Z(\C)$ of $\C$ whose image in $\C$ (which we, abusing notation, will also denote by $Z$) contains $\one$ (see \cite[Theorem 2.13]{O2} and \cite[Theorem 5.9]{Sh}). Namely, the map $\rho_Z$ is constructed as follows. 
Note that $V_Z^*=\Hom_\C(Z,\one)$ using the composition pairing. Given $X\in \C$, $v\in V_Z$, $f\in V_Z^*$, we define $\rho_Z(X)$ by 
$$
f(\rho_Z(X)v)={\rm Tr}((1_X\otimes f)\circ c_{X,Z}\circ (v\otimes 1_X)).
$$
where $c_{X,Z}: Z\otimes X\to X\otimes Z$ is the central structure  of $Z$. 
Using the identity
$$
c_{Z,X\otimes Y}=(1_X\otimes c_{Z,Y})\circ (c_{Z,X}\otimes 1_Y),
$$ 
it is not hard to show that $\rho_Z$ is a representation. Note that 
$\rho_Z(X^*)=\rho_Z(X)^\dagger$ (the adjoint operator) and $\rho_{Z^*}\cong \overline \rho_Z$ (the same action on the complex conjugate space). 

\begin{theorem}\label{os} {\em (\cite[Theorem 2.13]{O2})} If $\rho=\rho_Z$ then one has $\alpha_\rho=\frac{\dim\C}{\dim Z}$. In particular, if $[Z:\one]>0$ then $\dim Z>0$.  
\end{theorem}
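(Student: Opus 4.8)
The plan is to compute the formal codegree $\alpha_{\rho_Z}$ directly from its definition as the eigenvalue of the central element $z_{\rho_Z} = \sum_i \mathrm{Tr}(\rho_Z(b_i)) b_i^*$, using the concrete formula for $\rho_Z(X)$ in terms of the braiding $c_{X,Z}$. First I would identify the trace $\mathrm{Tr}(\rho_Z(X))$ with a categorical quantity: since $V_Z = \Hom_\C(\one, Z)$ and $V_Z^* = \Hom_\C(Z,\one)$ under the composition pairing, the operator $\rho_Z(X)$ on $V_Z$ has trace equal to $\sum_v (1_X \otimes \mathrm{ev}_v)\circ c_{X,Z}\circ (\mathrm{coev}_v \otimes 1_X)$ summed over a dual pair of bases of $V_Z$ and $V_Z^*$; packaging this, $\mathrm{Tr}(\rho_Z(X))$ is the categorical trace (in the pivotal structure of $\C$) of the endomorphism of $X$ obtained by "closing up" the braiding against the identity of $Z$ through the unit, i.e.\ essentially the partial trace over $Z$ of $c_{X,Z}$ composed with the (co)evaluations $\one \to Z$, $Z \to \one$.

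Next I would use Lemma~\ref{matrel}, or rather re-derive its content in this setting, to evaluate the action of $z_{\rho_Z}$ on a vector. Applying the defining formula of $\rho_Z$ to $\sum_i \mathrm{Tr}(\rho_Z(b_i)) b_i^*$ and unwinding, the sum $\sum_i \rho_Z(b_i) \otimes (\text{trace data of } b_i^*)$ should collapse, via the standard fact that $\sum_i b_i \otimes b_i^*$ corresponds under categorification to the "sum over all simples of $X \otimes X^*$" — concretely this is where $\dim\C = \sum_i d_i^2$ enters. The key computational identity is that the double braiding summed over a complete set of simple objects gives (up to the global dimension) a projection, or more precisely that the endomorphism $\sum_i (\text{partial trace of } c_{b_i, -}\text{-type expression})$ acts on $Z$ as multiplication by $\dim\C / \dim Z$. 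This is a well-known property of the canonical element/Müger's characterization of the dimension of the center, and I would cite it or the relevant computation in \cite{O2} rather than reprove the Verlinde-type formula from scratch.

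The main obstacle is bookkeeping: carefully matching the algebraic pairing $\sum_i b_i \otimes b_i^*$ with the correct categorical morphism, keeping track of the pivotal/spherical structure so that the categorical traces are symmetric and the left/right partial traces agree, and ensuring the braiding $c_{X,Z}$ versus $c_{Z,X}$ conventions are used consistently with the definition of $\rho_Z$ given above. Once $\alpha_{\rho_Z} = \dim\C / \dim Z$ is established, the final assertion is immediate: we already know $\alpha_\rho > 0$ for every irreducible representation $\rho$ of $A_\C$ (this was observed just before Lemma~\ref{matrel}, from $\tau(a) = \sum_\rho \alpha_\rho^{-1} \mathrm{Tr}\,\rho(a)$ together with positive-definiteness of the inner product), and $\dim\C > 0$ in a spherical fusion category over $\CC$, so $\dim Z = \dim\C / \alpha_{\rho_Z} > 0$ whenever $V_Z \neq 0$, i.e.\ whenever $[Z:\one] > 0$.
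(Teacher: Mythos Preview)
The paper does not actually prove this theorem: it is stated with a citation to \cite[Theorem~2.13]{O2} and no argument is given in the text. So there is nothing to compare your proposal against except that citation itself.

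Your sketch is a reasonable outline of how the proof in \cite{O2} proceeds, and you correctly identify that the crux is the Verlinde/M\"uger-type identity expressing $\sum_i$ of partial traces of the braiding as $\dim\C/\dim Z$ times a projection; since you plan to cite \cite{O2} for precisely that step, your proposal amounts to the same thing as the paper's, just with more surrounding commentary. The one part that is genuinely self-contained in your write-up --- the deduction that $\dim Z>0$ from $\alpha_\rho>0$ and $\dim\C>0$ --- is correct and is indeed the intended reading of ``In particular'' in the statement.
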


\begin{corollary}\label{matrel1} Under the assumptions of Lemma \ref{matrel} 
$$
\sum_i f_1(\rho(b_i)v_1)f_2(\rho(b_i^*)v_2)=\frac{\dim\C}{\dim Z}f_2(v_1)f_1(v_2).
$$
\end{corollary} 

\section{Positivity results} 
Recall that a {\bf unitary fusion category} is a fusion category with a $*$-structure (\cite{T}; see also \cite{G}, Subsection 2.1 for a full definition). A {\bf unitary categorification} of a fusion ring is a realization of this ring as the Grothendieck ring of a unitary fusion category. 
  
Let $(V_s,\rho_s: A\to {\rm End} V_s)$ be a collection of irreducible (unitary) representations of $A_{\Bbb C}$ and $v_s\in V_s$, $s=1,...,n$. 

\begin{theorem}\label{mai} (\cite{LPW}, Proposition 8.3) (i) If $A$ admits a unitary categorification then we have 
$$
\sum_i \frac{1}{d_i^{n-2}}(\rho_1(b_i)v_1,v_1)...(\rho_n(b_i)v_n,v_n)\ge 0. 
$$

(ii) (\cite{LPW}, Corollary 8.5) If in (i) $A$ is commutative then 
$$
\sum_i \frac{1}{d_i^{n-2}}\rho_1(b_i)...\rho_n(b_i)\ge 0. 
$$
\end{theorem}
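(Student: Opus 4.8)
The plan is to deduce the positivity in Theorem~\ref{mai} from Corollary~\ref{matrel1} by exhibiting the left-hand side as a trace (a hermitian form evaluated on a vector paired with itself), which forces it to be nonnegative in the unitary case. Concretely, suppose $A$ is categorified by a unitary fusion category $\C$. Each representation $\rho_s$ is of the form $\rho_{Z_s}$ for a simple object $Z_s$ of $\mathcal Z(\C)$ with $[Z_s:\one]>0$, acting on $V_s = \Hom_\C(\one, Z_s)$, and by Theorem~\ref{os} we have $\alpha_{\rho_s} = \dim\C/\dim Z_s > 0$. The key structural move is to consider the object $Z := Z_1 \otimes \cdots \otimes Z_n$ in $\mathcal Z(\C)$ and the vector $v := v_1 \otimes \cdots \otimes v_n \in \Hom_\C(\one, Z_1)\otimes\cdots\otimes\Hom_\C(\one,Z_n) \hookrightarrow \Hom_\C(\one, Z)$. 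Then one wants to rewrite $\sum_i d_i^{-(n-2)} \prod_s (\rho_s(b_i) v_s, v_s)$ in terms of a single "matrix element'' type expression for $Z$, up to a positive scalar.

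The main technical step is a lemma computing the composite central structure $c_{X, Z}$ for $Z = Z_1 \otimes \cdots \otimes Z_n$ in terms of the $c_{X, Z_s}$, and then tracking how the diagrammatic/trace expression $\mathrm{Tr}((1_X \otimes f)\circ c_{X,Z}\circ(v\otimes 1_X))$ factors. The natural approach is induction on $n$. For $n = 1$ the statement $\sum_i d_i (\rho_1(b_i)v_1, v_1) \ge 0$ should follow because $z := \sum_i d_i b_i^*$ — wait, more precisely $\sum_i d_i \rho_1(b_i)$ acts as multiplication by $\dim\C/\dim Z_1$ times a positive operator; in any case $n=1,2$ are easy base cases ($n=2$: $\sum_i (\rho_1(b_i)v_1,v_1)(\rho_2(b_i)v_2,v_2)$ is handled directly by Corollary~\ref{matrel1} with the appropriate choice of $f_j$, giving something manifestly of the form $\frac{\dim\C}{\dim Z}|(\cdot,\cdot)|^2 \ge 0$ after pairing). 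For the inductive step, I would split off one factor: write the $n$-fold sum, insert the relation $\sum_i \rho(b_i) v \otimes \rho(b_i^*) w = \alpha_\rho (\text{rank-one term})$ from Lemma~\ref{matrel} applied in $\mathcal Z(\C)$ to the tensor factor, and use that $d_{i^*} = d_i$ so the powers of $d_i$ are symmetric, reducing the $n$-case to the $(n-1)$-case after absorbing a positive constant $\dim\C/\dim Z_n$.

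The cleanest route is probably to interpret the whole expression as a formal-codegree sum for the representation $\rho_Z$ of $A_\CC$ attached to $Z = Z_1\otimes\cdots\otimes Z_n \in \mathcal Z(\C)$: the factor $d_i^{-(n-2)} = d_i^{2-n}$ is exactly what appears when one writes $b_i$ acting diagonally through $n$ copies, remembering that the regular-type relation contributes one power of $d_i$ per "closed loop'' and that $\rho_Z(b_i)$ on $\Hom_\C(\one, Z)$ involves the braiding of $b_i$ past all $n$ strands. Since $Z$ contains $\one$ (as $[Z:\one] \ge \prod_s [Z_s:\one] > 0$), Theorem~\ref{os} gives $\alpha_{\rho_Z} = \dim\C/\dim Z$, and $\dim Z = \prod_s \dim Z_s > 0$ by the same theorem, so the overall scalar is positive; the sum itself becomes $\alpha_{\rho_Z}^{-1}$ times $\|v\|^2$-type quantity, hence $\ge 0$. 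For part (ii), when $A$ is commutative all $V_s$ are one-dimensional, so choosing $v_s$ to be a unit basis vector reduces $(\rho_s(b_i)v_s,v_s)$ to the scalar $\rho_s(b_i)$, and part (i) applied with arbitrary test vectors shows the operator $\sum_i d_i^{2-n}\rho_1(b_i)\cdots\rho_n(b_i)$ is a nonnegative scalar, i.e.\ $\ge 0$ as an operator.

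The hard part will be making the diagrammatic bookkeeping in $\mathcal Z(\C)$ rigorous without drawing pictures — specifically, verifying that the braiding of $b_i \in \C$ past the tensor product $Z_1\otimes\cdots\otimes Z_n$, combined with the evaluation/coevaluation maps defining $V_Z$, produces exactly the product $\prod_s(\rho_s(b_i)v_s, v_s)$ with the correct power $d_i^{2-n}$ of $d_i$ (one factor $d_i$ for the initial and final "caps'' on the $b_i$ strand, and one factor $d_i^{-1}$ for each of the $n-1$ internal points where consecutive $Z_s$-strands meet, or some such accounting). Getting this normalization precisely right, and checking it against the $n=2$ case delivered directly by Corollary~\ref{matrel1}, is where the real care is needed; once the identity $\sum_i d_i^{2-n}\prod_s(\rho_s(b_i)v_s,v_s) = \frac{\dim Z}{\dim\C}\,\bigl|\langle v, e\rangle\bigr|^2$ (for an appropriate vector $e$, or more generally a manifestly PSD form) is established, positivity is immediate from $\dim\C, \dim Z > 0$.
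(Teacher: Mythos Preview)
Your high-level picture is right---pass to the Drinfeld center, form $Z=Z_1\otimes\cdots\otimes Z_n$, and exhibit the sum as $\dim\C$ times a squared norm---but both concrete routes you sketch have genuine gaps. The ``cleanest route'' invokes Theorem~\ref{os} for $Z$, but $Z$ is a tensor product of simples in $\mathcal Z(\C)$ and is typically \emph{not} simple; the representation of $A_\CC$ on $\Hom_\C(\one,Z)$ is then reducible, so it has no formal codegree and Theorem~\ref{os} does not apply. (Relatedly, your target formula $\tfrac{\dim Z}{\dim\C}|\langle v,e\rangle|^2$ is a rank-one quadratic form in $v$, which cannot be correct in general: the actual answer is $\dim\C\cdot\|\phi(v)\|^2$ for a projection $\phi$ whose range $\Hom_{\mathcal Z(\C)}(\one,Z)$ may have dimension greater than one.) The inductive route also fails: Lemma~\ref{matrel} consumes a sum $\sum_i$ in which exactly two factors carry $b_i$ and $b_{i^*}$ in the \emph{same} irreducible representation. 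Here all $n$ factors depend on the same $b_i$, so there is no way to ``split off one factor'' and reduce to $n-1$ while keeping the remaining $n-1$ factors intact.

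The missing structural ingredient, which is what the paper supplies, is a decomposition over \emph{all} simple objects of $\mathcal Z(\C)$. One introduces the orthogonal projection $\phi:V_1\otimes\cdots\otimes V_n\to\Hom_{\mathcal Z(\C)}(\one,Z_1\otimes\cdots\otimes Z_n)$ and expands $v_2\otimes\cdots\otimes v_n$ via
\[
\Hom_\C(\one,Z_2\otimes\cdots\otimes Z_n)=\bigoplus_{Z\in{\rm Irr}\,\mathcal Z(\C)}\Hom_\C(Z,\one)\otimes\Hom_{\mathcal Z(\C)}(Z^*,Z_2\otimes\cdots\otimes Z_n).
\]
This (together with the braiding identity you anticipated) rewrites $d_i^{2-n}\prod_{s\ge 2}(\rho_s(b_i)v_s,v_s)$ as $\sum_Z\dim Z\cdot{\rm Tr}(B_Z\,\rho_{Z^*}(b_i))$ for certain positive semidefinite operators $B_Z$ built from $\phi$. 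Now the sum over $i$ involves exactly two matrix elements, $(\rho_1(b_i)v_1,v_1)$ and those of $\rho_{Z^*}(b_i)$, so Lemma~\ref{matrel} applies (killing all $Z\ne Z_1^*$) and yields $\dim\C\cdot(B_{Z_1}v_1,v_1)=\dim\C\cdot\|\phi(v_1\otimes\cdots\otimes v_n)\|^2\ge 0$. Without this decomposition step you cannot bring the expression into a form to which Lemma~\ref{matrel} applies.
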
 

\begin{example}\label{exa} 1. Let $n=1$. Then Theorem \ref{mai} says that $(v,\rho(\sum_i d_ib_i)v)\ge 0$. This holds (regardless of $A$ being unitarily categorifiable) since $R:=\sum_i d_ib_i$ is the regular element of $A$, hence $Ra={\rm FPdim}(a)R$ for any $a\in A$. 

2. Let $n=2$. Then Theorem \ref{mai} says that $\sum_i (\rho_1(b_i)v_1,v_1)(\rho_2(b_i)v_2,v_2)\ge 0$. This follows (again regardless  of $A$ being unitarily categorifiable) 
from Lemma \ref{matrel}. Indeed, since $\rho(b_i^*)=\rho(b_i)^\dagger$, Lemma \ref{matrel} implies that this sum is zero unless $\rho_1\cong \overline\rho_2$, and 
$$
\sum_i (\rho(b_i)v_1,v_1)(\overline\rho(b_i)v_2,v_2)=\alpha_\rho|(v_1,v_2)|^2.
$$

However, for $n\ge 3$, as shown in \cite{LPW}, unitary categorifiability of $A$ is essential. 
\end{example}

\begin{proof} It suffices to prove (i) for $n\ge 3$. Let $\C$ be a unitary fusion category categorifying $A$. 
Recall (\cite{EGNO}, Section 9.5) that it has a canonical spherical structure in which the dimensions of simple objects $X_i$ are $d_i$; so let us endow $\C$ with this structure.
Let $Z_1,...,Z_n\in \mathcal Z(\C)$, 
and consider the vector space $\Hom_{\mathcal Z(\C)}(\one,Z_1\otimes...\otimes Z_n)$.  
Since $\C$ is unitary, this space has a positive definite Hermitian inner product 
given by $(v,w)=w^\dagger\circ v$.
Now let $Z_1,...,Z_n\in \mathcal Z(\C)$ be objects containing $\one$ as objects of $\C$. Let $(V_i,\rho_i)$ be the corresponding representations of $A_\Bbb C$, $i=1,...,n$. 
We have a natural map 
$$
\phi: V_1\otimes...\otimes V_n\to \Hom_{\mathcal Z(\C)}(\one,Z_1\otimes...\otimes Z_n)
$$ 
given by the orthogonal projection of $v_1\otimes...\otimes v_n\in  \Hom_\C(\one,Z_1\otimes...\otimes Z_n)$ to the space $ \Hom_{\mathcal Z(\C)}(\one,Z_1\otimes...\otimes Z_n)$, where $v_i\in \Hom(\one ,Z_i)$. In other words, we may view $v_1\otimes...\otimes v_n$ as an element of $\Hom_{\mathcal Z(\C)}(Z_1\otimes...\otimes Z_n,\one)^*$ by taking composition, and $\phi(v_1\otimes...\otimes v_n)$ is the corresponding element 
of $\Hom_{\mathcal Z(\C)}(\one, Z_1\otimes...\otimes Z_n)$. This element may be viewed as a $\mathcal Z(\C)$-morphism $Z_1^*\to Z_2\otimes...\otimes Z_n$. 
Now, we have 
\begin{eqnarray*}
\Hom_\C(\one,Z_2\otimes...\otimes Z_n) &=& \oplus_{Z\in {\rm Irr}\mathcal Z(\C)} \Hom_\C(\one,Z^*)\otimes \Hom_{\mathcal Z(\C)}(Z^*,Z_2\otimes...\otimes Z_n) \\
&=& \oplus_{Z\in {\rm Irr}\mathcal Z(\C)} \Hom_\C(Z,\one)\otimes \Hom_{\mathcal Z(\C)}(Z^*,Z_2\otimes...\otimes Z_n).
\end{eqnarray*}
Let $\lbrace{e_{Z,j}\rbrace}$ is an orthonormal basis of $V_Z$. Then we get 
\begin{equation}\label{2...n}
v_2\otimes \cdots \otimes v_n=\sum_{Z\in {\rm Irr}\mathcal Z(\C)} \sum_{j=1}^{\dim V_Z} e_{Z,j}^\dagger\otimes \phi(e_{Z,j}\otimes v_2\otimes...\otimes v_n).
\end{equation}
Recall that 
$$
c_{Z\otimes Z',X}=(c_{Z,X}\otimes 1_{Z'}) \circ (1_{Z}\otimes c_{Z',X}).
$$
Therefore,  using \eqref{2...n}, it is easy to see that 
\begin{eqnarray*}
\lefteqn{\frac{1}{d_i^{n-2}}(\rho_2(X_i)v_2,v_2)...(\rho_n(X_i)v_n,v_n)} \\
&=&
\sum_{Z\in {\rm Irr}\mathcal Z(\C)} \dim Z\sum_{j,k=1}^{\dim V_Z} (\overline\rho_Z(X_i)e_{Z,j},e_{Z,k})(\phi(e_{Z,j}\otimes v_2\otimes...\otimes v_n),\phi(e_{Z,k}\otimes v_2\otimes...\otimes v_n)) \\
&=&
\sum_{Z\in {\rm Irr}\mathcal Z(\C)}\dim Z\sum_{j=1}^{\dim V_Z} (\phi(e_{Z,j}\otimes v_2\otimes...\otimes v_n),\phi(\rho_{Z^*}(X_i)e_{Z,j}\otimes v_2\otimes...\otimes v_n)).
\end{eqnarray*}
Let $B_Z: V_Z\to V_Z$ be an operator whose matrix elements in the basis
$e_{Z,j}$ are 
$$
B_{Z,jk}=(\phi(e_{Z,j}\otimes v_2\otimes...\otimes v_n),\phi(e_{Z,k}\otimes v_2\otimes...\otimes v_n)). 
$$
Then 
$$
\frac{1}{d_i^{n-2}}(\rho_2(X_i)v_2,v_2)...(\rho_n(X_i)v_n,v_n)=
\sum_{Z\in {\rm Irr}\mathcal Z(\C)}\dim Z \sum_{j=1}^{\dim V_Z}(B_Ze_{Z,j},\rho_{Z^*}(X_i)e_{Z,j}). 
$$
Thus,
\begin{eqnarray*}
\lefteqn{\sum_i \frac{1}{d_i^{n-2}}(\rho_1(X_i)v_1,v_1)(\rho_2(X_i)v_2,v_2)...(\rho_n(X_i)v_n,v_n)}\\
&=& \sum_i (\rho_1(X_i)v_1,v_1)\sum_{Z\in {\rm Irr}\mathcal Z(\C)}\dim Z\sum_{j=1}^{\dim V_Z}(B_Ze_{Z,j},\rho_{Z^*}(X_i)e_{Z,j}) \\
&=& \sum_i \dim Z_1(\rho_{Z_1}(X_i)v_1,v_1)\sum_{j=1}^{\dim V_{Z_1}}(B_{Z_1}e_{Z_1,j},\rho_{Z_1^*}(X_i)e_{Z_1,j}) \\
&=&  \sum_{j=1}^{\dim V_{Z_1}}\dim Z_1(B_{Z_1}e_{Z_1,j},v_1)(v_1,\rho_{Z_1^*}(X_i)e_{Z_1,j})\\
&=& (B_{Z_1}v_1,v_1)=\dim \C\cdot ||\phi(v_1\otimes...\otimes v_n)||^2, 
\end{eqnarray*}
where for the last equality we used Lemma \ref{matrel} and Theorem \ref{os}. 
Since $\C$ is unitary, so is its Drinfeld center, so this squared norm is $\ge 0$, as desired. 
\end{proof} 

The formula at the end of the proof of Theorem \ref{mai} can, in fact, be generalized to the situation when the fusion category $\C$ is spherical but not assumed unitary or even Hermitian. Namely, let $v_i\in V_i$, $f_i\in V_i^*$, and 
let 
$$
\psi: V_1^*\otimes...\otimes V_n^*\to \Hom(\one,Z_1\otimes...\otimes Z_n)^*\cong 
\Hom(Z_1\otimes...\otimes Z_n,\one)
$$ 
be the natural map. 

\begin{proposition}\label{p1} We have 
$$
\sum_i \frac{1}{(\dim X_i)^{n-2}}f_1(\rho_1(X_i)v_1)...f_n(\rho_n(X_i)v_n)=
\dim(\C)(\phi(v_1\otimes...\otimes v_n),\psi(f_1\otimes...\otimes f_n)). 
$$
where $\dim$ denotes the categorical dimensions. 
\end{proposition}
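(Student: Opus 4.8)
The plan is to mirror the computation at the end of the proof of Theorem \ref{mai}, but to replace all appeals to unitarity (Hermitian inner products, adjoints, positive-definiteness) by purely categorical bilinear pairings, so that the argument goes through for any spherical $\C$. First I would set up the bilinear analogues of the objects used before. For a simple object $Z$ of $\mathcal Z(\C)$ containing $\one$, write $V_Z = \Hom_\C(\one,Z)$ and $V_Z^* = \Hom_\C(Z,\one)$, paired by composition, and fix dual bases $\{e_{Z,j}\}$ of $V_Z$ and $\{e_{Z,j}^*\}$ of $V_Z^*$ (so $e_{Z,k}^*\circ e_{Z,j}=\delta_{jk}\id_\one$). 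In place of the orthogonal-projection map $\phi$ I would use the honest categorical decomposition
\begin{equation*}
\Hom_\C(\one,Z_1\otimes\cdots\otimes Z_n)=\bigoplus_{Z\in\mathrm{Irr}\,\mathcal Z(\C)}\Hom_\C(\one,Z)\otimes\Hom_{\mathcal Z(\C)}(Z,Z_1\otimes\cdots\otimes Z_n),
\end{equation*}
which is canonical and does not require a $*$-structure; the components of $v_1\otimes\cdots\otimes v_n$ under this splitting play the role of $\phi(e_{Z,j}\otimes v_2\otimes\cdots\otimes v_n)$, and similarly $f_1\otimes\cdots\otimes f_n$ decomposes via the dual splitting of $\Hom_\C(Z_1\otimes\cdots\otimes Z_n,\one)$ to give the role of $\psi$.

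Next I would run the same two reductions as in the theorem. Using the multiplicativity of the central structure, $c_{Z\otimes Z',X}=(c_{Z,X}\otimes 1_{Z'})\circ(1_Z\otimes c_{Z',X})$, together with the sphericality of $\C$ to handle the normalizing factors $1/(\dim X_i)^{n-2}$ (each extra tensor factor $Z_k$ for $k\ge 2$ contributes a loop of $X_i$ around $Z_k$, evaluated by $\dim\rho_{Z_k}(X_i)$-type partial traces, producing exactly the compensating powers of $\dim X_i$), I would rewrite
\begin{equation*}
\frac{1}{(\dim X_i)^{n-2}}f_2(\rho_2(X_i)v_2)\cdots f_n(\rho_n(X_i)v_n)=\sum_{Z}\dim Z\sum_{j,k}\langle e_{Z,j}^*,\rho_Z(X_i)e_{Z,k}\rangle\, B_{Z,jk},
\end{equation*}
where now $B_{Z,jk}$ is the bilinear pairing of the $Z$-components of $v_2\otimes\cdots\otimes v_n$ (coming from $\phi$) with those of $f_2\otimes\cdots\otimes f_n$ (coming from $\psi$), i.e. the analogue of the operator $B_Z$ but defined by composition rather than by a Hermitian form. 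The replacement of $(\overline\rho_Z(X_i)e_{Z,j},e_{Z,k})$ by the matrix element $\langle e_{Z,j}^*,\rho_Z(X_i)e_{Z,k}\rangle$ is exactly where the conjugation/adjoint in the unitary proof gets absorbed: in the bilinear setting there is no conjugation, and the identity $\rho_{Z^*}\cong\overline\rho_Z$ is not needed — instead one simply uses that $b_i\mapsto \rho_Z(b_i)$ and $b_i\mapsto\rho_Z(b_i^*)^{\mathrm t}$ (transpose) are dual in the composition pairing, which is the content of the second displayed identity in the proof of Lemma \ref{matrel}.

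Then I would sum over $i$ and collapse using Lemma \ref{matrel} and Theorem \ref{os}. Multiplying by $f_1(\rho_1(X_i)v_1)$ and summing over $i$, the sum over $i$ is handled by Lemma \ref{matrel} applied to the representation $\rho_1=\rho_{Z_1}$: it forces the $Z_1$-term to survive and contracts the $X_i$-dependence into a single pairing, yielding $\alpha_{\rho_{Z_1}}$ times the pairing of $\phi(v_1\otimes\cdots\otimes v_n)$ with $\psi(f_1\otimes\cdots\otimes f_n)$; by Theorem \ref{os}, $\alpha_{\rho_{Z_1}}\cdot\dim Z_1=\dim\C$, giving the claimed formula. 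The main obstacle I anticipate is purely bookkeeping rather than conceptual: getting the powers of $\dim X_i$ and the factors of $\dim Z$ to come out correctly when the Hermitian inner products are replaced by composition pairings, since in the unitary case the identification $V_Z^*\cong\overline{V_Z}$ silently converts "bra" vectors into "ket" vectors and one must now keep the two sides genuinely distinct; I would verify the normalization by specializing to $n=2$, where the statement must reduce to the identity $\sum_i \rho_1(b_i)\otimes\rho_2(b_i)$-pairing $=\alpha\cdot(\text{pairing})$, i.e. to Lemma \ref{matrel} itself, and to $n=1$ (the regular-element identity), both of which pin down the constants unambiguously.
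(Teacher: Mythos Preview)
Your proposal is correct and follows exactly the approach the paper itself indicates: the paper does not give a separate proof of Proposition~\ref{p1}, but presents it as the evident generalization of the computation at the end of the proof of Theorem~\ref{mai} to the non-unitary spherical case, and your plan to rerun that computation with composition pairings and dual bases in place of Hermitian inner products and orthonormal bases is precisely what is meant. The key observation you make---that Lemma~\ref{matrel} is already stated bilinearly (with $v_i\in V$, $f_i\in V^*$) and hence applies verbatim, and that Theorem~\ref{os} supplies $\alpha_{\rho_{Z_1}}\dim Z_1=\dim\C$---is exactly the point.
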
 

Consider the operator $\psi^*\phi\in \End(V_1\otimes...\otimes V_n)$. 
Proposition \ref{p1} immediately implies 

\begin{corollary} Let $a_1,...,a_n\in A$. Then 
$$
\sum_i \frac{1}{(\dim X_i)^{n-2}}{\rm Tr}\rho_1(a_1X_i)...{\rm Tr}\rho_n(a_nX_i)=
\dim \C\cdot {\rm Tr}\left(\psi^*\phi\circ (\rho_1(a_1)\otimes...\otimes \rho_n(a_n))\right). 
$$
\end{corollary}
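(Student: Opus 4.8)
The plan is to obtain the Corollary from Proposition \ref{p1} by the standard device of writing each trace as a sum of diagonal matrix elements and then summing the (multilinear) identity of Proposition \ref{p1} over a basis and its dual basis. So first I would fix, for each $k=1,\dots,n$, a basis $\{e_{k,\alpha}\}$ of $V_k$ with dual basis $\{e_{k,\alpha}^*\}$ of $V_k^*$, and set $f_{k,\alpha}:=e_{k,\alpha}^*\circ\rho_k(a_k)\in V_k^*$. Since $\rho_k$ is an algebra homomorphism, $\rho_k(a_kX_i)=\rho_k(a_k)\rho_k(X_i)$, so
$$
{\rm Tr}\,\rho_k(a_kX_i)=\sum_\alpha e_{k,\alpha}^*\big(\rho_k(a_k)\rho_k(X_i)e_{k,\alpha}\big)=\sum_\alpha f_{k,\alpha}\big(\rho_k(X_i)e_{k,\alpha}\big).
$$
Substituting this into the left-hand side of the Corollary, expanding the product over $k$, and interchanging the finite summations, the left-hand side equals $\sum_{\alpha_1,\dots,\alpha_n}\sum_i (\dim X_i)^{-(n-2)}f_{1,\alpha_1}(\rho_1(X_i)e_{1,\alpha_1})\cdots f_{n,\alpha_n}(\rho_n(X_i)e_{n,\alpha_n})$, and Proposition \ref{p1}, applied with $v_k=e_{k,\alpha_k}$ and $f_k=f_{k,\alpha_k}$ for each fixed tuple $(\alpha_1,\dots,\alpha_n)$, rewrites the inner sum over $i$ as $\dim(\C)\big(\phi(e_{1,\alpha_1}\otimes\cdots\otimes e_{n,\alpha_n}),\psi(f_{1,\alpha_1}\otimes\cdots\otimes f_{n,\alpha_n})\big)$.

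Next I would identify the resulting double sum with $\dim\C\cdot{\rm Tr}\big(\psi^*\phi\circ(\rho_1(a_1)\otimes\cdots\otimes\rho_n(a_n))\big)$. The point is the canonical identity $\sum_\alpha(Be_{k,\alpha})\otimes e_{k,\alpha}^*=\sum_\alpha e_{k,\alpha}\otimes(e_{k,\alpha}^*\circ B)$ in $V_k\otimes V_k^*$, both sides being the image of $B\in\End V_k$ under $\End V_k\cong V_k\otimes V_k^*$; carrying it out for each $k$ with $B=\rho_k(a_k)$ and reordering tensor factors shows that, in $(V_1\otimes\cdots\otimes V_n)\otimes(V_1^*\otimes\cdots\otimes V_n^*)$,
$$
\sum_{\alpha_1,\dots,\alpha_n}\big(e_{1,\alpha_1}\otimes\cdots\otimes e_{n,\alpha_n}\big)\otimes\big(f_{1,\alpha_1}\otimes\cdots\otimes f_{n,\alpha_n}\big)=\sum_{\alpha_1,\dots,\alpha_n}\big(\rho_1(a_1)e_{1,\alpha_1}\otimes\cdots\otimes\rho_n(a_n)e_{n,\alpha_n}\big)\otimes\big(e_{1,\alpha_1}^*\otimes\cdots\otimes e_{n,\alpha_n}^*\big).
$$
Applying the bilinear form $(u,h)\mapsto(\phi(u),\psi(h))$ to both sides moves all the operators $\rho_k(a_k)$ onto the $\phi$-argument, so the double sum becomes $\dim\C\sum_{\alpha_1,\dots,\alpha_n}\big(\phi(\rho_1(a_1)e_{1,\alpha_1}\otimes\cdots\otimes\rho_n(a_n)e_{n,\alpha_n}),\psi(e_{1,\alpha_1}^*\otimes\cdots\otimes e_{n,\alpha_n}^*)\big)$; and since $\psi^*\phi$ is by construction the endomorphism of $V_1\otimes\cdots\otimes V_n$ representing the bilinear form $(u,h)\mapsto(\phi(u),\psi(h))$ relative to the composition pairing $\Hom(\one,Z_1\otimes\cdots\otimes Z_n)\times\Hom(Z_1\otimes\cdots\otimes Z_n,\one)\to\k$, this last expression is exactly $\dim\C\cdot{\rm Tr}\big(\psi^*\phi\circ(\rho_1(a_1)\otimes\cdots\otimes\rho_n(a_n))\big)$, as claimed.

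There is no real obstacle here beyond bookkeeping: the only points requiring care are the transpose conventions — note that $\C$ is not assumed unitary, so $\psi^*$ denotes the dual map for the composition pairing between $\Hom(\one,Z_1\otimes\cdots\otimes Z_n)$ and $\Hom(Z_1\otimes\cdots\otimes Z_n,\one)$, not a Hermitian adjoint — and the reordering of tensor factors in the displayed identity. One could even bypass the basis computation altogether by observing that both sides of the Corollary are manifestly multilinear in $(a_1,\dots,a_n)$, so it suffices to check the identity when each $a_k$ acts as a rank-one operator $e_{k,\alpha_k}\otimes e_{k,\beta_k}^*$, in which case it reduces verbatim to Proposition \ref{p1}; but the computation above is just as short.
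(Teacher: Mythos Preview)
Your proposal is correct and is precisely the routine unpacking of the paper's one-line ``Proposition \ref{p1} immediately implies'': you expand each trace over a basis, apply Proposition \ref{p1} termwise, and reassemble the result as the trace of $\psi^*\phi\circ(\rho_1(a_1)\otimes\cdots\otimes\rho_n(a_n))$. Your care about the meaning of $\psi^*$ (dual with respect to the composition pairing, not a Hermitian adjoint) is exactly the right point to flag in the non-unitary setting.
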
 

In particular, we have 

\begin{corollary} If $V_i$ are 1-dimensional then
$$
I_n(\rho_1,...,\rho_n):=\sum_i \frac{1}{(\dim X_i)^{n-2}}\rho_1(X_i)...\rho_n(X_i)=
\dim\C\cdot (\phi,\psi).
$$
\end{corollary}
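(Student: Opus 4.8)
This should be nothing more than the special case $a_1=\cdots=a_n=1_A$ of the preceding Corollary, so the plan is to specialize that identity and simplify both sides using $\dim V_i=1$. On the left, since each $V_i$ is one-dimensional, $\rho_i(X_i)\in\End(V_i)$ is multiplication by a scalar (which I still write $\rho_i(X_i)$), $\rho_i(1_A)=\id_{V_i}$, and ${\rm Tr}\,\rho_i(1_A X_i)={\rm Tr}\,\rho_i(X_i)=\rho_i(X_i)$; hence the left-hand side of the preceding Corollary collapses to $\sum_i (\dim X_i)^{-(n-2)}\rho_1(X_i)\cdots\rho_n(X_i)=I_n(\rho_1,\dots,\rho_n)$. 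On the right, $\rho_1(1_A)\otimes\cdots\otimes\rho_n(1_A)$ is the identity of the one-dimensional space $V_1\otimes\cdots\otimes V_n$, so ${\rm Tr}(\psi^*\phi\circ(\rho_1(1_A)\otimes\cdots\otimes\rho_n(1_A)))={\rm Tr}(\psi^*\phi)$, and on a one-dimensional space the trace of the operator $\psi^*\phi$ is simply the scalar by which it acts.

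It then remains to identify that scalar with the pairing $(\phi,\psi)$. First I would fix a nonzero $v_i\in V_i$ and let $f_i\in V_i^*$ be the dual covector, set $v=v_1\otimes\cdots\otimes v_n$, $f=f_1\otimes\cdots\otimes f_n$, and write $\phi$ for $\phi(v)\in\Hom(\one,Z_1\otimes\cdots\otimes Z_n)$ and $\psi$ for $\psi(f)\in\Hom(Z_1\otimes\cdots\otimes Z_n,\one)$. Unwinding the definition of $\psi^*$ as the transpose of $\psi$ relative to the composition pairing $\Hom(\one,Z_1\otimes\cdots\otimes Z_n)\times\Hom(Z_1\otimes\cdots\otimes Z_n,\one)\to\Hom(\one,\one)=\k$ (note that no Hermitian structure is used here, in contrast to the proof of Theorem \ref{mai}), the scalar by which $\psi^*\phi$ acts is exactly $(\phi(v),\psi(f))=(\phi,\psi)$. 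The one genuine check is that this number does not depend on the normalizations: rescaling $v_i\mapsto\lambda_iv_i$ forces $f_i\mapsto\lambda_i^{-1}f_i$, so $v\otimes f$ is canonical, and since $\phi$ is linear in $v$, $\psi$ is linear in $f$, and the pairing is bilinear, $(\phi,\psi)$ is well defined. Combining the two sides gives $I_n(\rho_1,\dots,\rho_n)=\dim\C\cdot(\phi,\psi)$.

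A second, essentially equivalent route is to apply Proposition \ref{p1} directly with these same $v_i$ and $f_i$: then $f_i(\rho_i(X_i)v_i)=\rho_i(X_i)$, so the left-hand side of Proposition \ref{p1} is $I_n(\rho_1,\dots,\rho_n)$ verbatim, and the right-hand side is $\dim(\C)(\phi(v_1\otimes\cdots\otimes v_n),\psi(f_1\otimes\cdots\otimes f_n))=\dim\C\cdot(\phi,\psi)$. I do not expect any real obstacle: the content is a bookkeeping specialization, and the only point needing a word of justification is the independence of $(\phi,\psi)$ from the choice of basis vectors in the $V_i$, together with the identification of the trace of the scalar operator $\psi^*\phi$ with that pairing.
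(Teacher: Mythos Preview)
Your proposal is correct and matches the paper's approach: the paper presents this Corollary with the words ``In particular, we have'' after the preceding Corollary, together with the one-line remark that $\phi,\psi$ are to be treated as vectors since $V_1\otimes\cdots\otimes V_n$ is one-dimensional. Your specialization $a_1=\cdots=a_n=1_A$ (or equivalently your direct appeal to Proposition~\ref{p1}) is exactly this, with the bookkeeping spelled out.
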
 

Here we treat $\phi,\psi$ as vectors since the space $V_1\otimes...\otimes V_n$ is 1-dimensional. 

\begin{remark} It is easy to see that the invariants $I_n$ satisfy the recursion
$$
I_n(\rho_1,...,\rho_n)=\sum_{\rho} \alpha_\rho^{-1}I_{n-1}(\rho_1,...,\rho_{n-2},\rho)I_3(\overline \rho,\rho_{n-1},\rho_n),\ n\ge 3.
$$
(see \cite{LPW}). This implies that the multiplication law on the complexified Grothendieck group of the category of $A_{\Bbb C}$-modules given by 
$$
\rho_1\ast\rho_2=\sum_\rho \alpha_\rho^{-1}I_3(\rho_1,\rho_2,\overline\rho)\rho
$$
is commutative and associative, giving this group the structure of a commutative Frobenius algebra. This is nothing but the dual fusion ring considered in \cite{LPW}. 
\end{remark} 

\section{Integrality properties of spherical fusion categories}

In this section we explore integrality properties for spherical fusion categories over $\Bbb C$. For simplicity we will restrict ourselves to the case of commutative fusion rings. 

\subsection{Isaacs criterion and Frobenius type} 

Let $\C$ be a fusion category with fusion ring $A$. Let $s\ge 0$ be a rational number.   

\begin{definition}\label{pdef} We say that $\C$ is $s$-{\bf Isaacs} if for any character $\rho: A\to \Bbb C$ and any simple object $X\in \C$, the number 
$$
\lambda_s(\rho,X):=(\dim \C)^s(\dim Z_\rho)^{1-s}\frac{\rho(X)}{\dim X}
$$
is an algebraic integer.  
\end{definition} 

Since $\dim Z_\rho$ divides $\dim \C$, if $\C$ is $s$-Isaacs then it is $t$-Isaacs for any $t>s$.  

The $0$-Isaacs property will simply be called the {\bf Isaacs property}. It was introduced in \cite{LPR,LPR2}. This definition was motivated by the following proposition. 

\begin{proposition} Any ribbon fusion category is Isaacs. 
\end{proposition}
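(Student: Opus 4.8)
The plan is to establish integrality of $\lambda_0(\rho,X) = (\dim Z_\rho)\,\rho(X)/\dim X$ for a ribbon fusion category $\C$ by producing a commutative ring that is finitely generated as a $\ZZ$-module and contains this number. The natural candidate is the Grothendieck ring of the Drinfeld center $\mathcal Z(\C)$, or rather its image under a suitable character. Recall from Theorem~\ref{os} that $\rho = \rho_Z$ for a simple object $Z \in \mathcal Z(\C)$ containing $\one$, and $\alpha_{\rho} = \dim\C/\dim Z$, so $\dim Z_\rho := \dim Z = \dim\C/\alpha_\rho$. Thus $\lambda_0(\rho,X) = (\dim\C/\alpha_\rho)\cdot \rho_Z(X)/\dim X$, and the goal becomes showing that $(\dim\C)\rho_Z(X)/(\alpha_\rho \dim X)$ is an algebraic integer.

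First I would use the fact that for a \emph{ribbon} (in particular braided) category, the forgetful functor $\C \to \mathcal Z(\C)$ has a canonical section-like structure: every object of $\C$ carries a central structure via the braiding $c_{X,Y}$, giving a braided functor $\C \to \mathcal Z(\C)$. This means the representation $\rho_Z$ of $A_\CC$ can be understood through fusion in $\mathcal Z(\C)$: the operator $\rho_Z(X)$ on $V_Z = \Hom_\C(\one,Z)$ is, up to the sphericality normalization, the action of $[X]$ (pushed into $\mathcal Z(\C)$) on the appropriate Hom-space. Next I would invoke the standard fact (Theorem~\ref{os} applied to $\mathcal Z(\C)$ itself, or the character theory of fusion rings) that for the modular-type data of $\mathcal Z(\C)$ the normalized matrix entries $\frac{\dim\mathcal Z(\C)}{\alpha}\cdot \frac{(\text{char value})}{\dim}$ are algebraic integers — this is precisely the classical Isaacs/Burnside-type argument that in a based ring the formal codegrees times normalized character values are integral, because $z_\rho = \sum_i \mathrm{Tr}(\rho(b_i)) b_i^*$ is an algebraic integer acting on an integral module.

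The key computation will be to relate $\dim\mathcal Z(\C) = (\dim\C)^2$ and the formal codegrees of $\mathcal Z(\C)$ to the quantities $\dim\C$ and $\alpha_\rho$ appearing in $\lambda_0(\rho,X)$. Concretely, I expect that $\rho_Z(X)/\dim X$ arises as a character value of the Grothendieck ring of $\mathcal Z(\C)$ evaluated on $[X]$ (under $\C\hookrightarrow\mathcal Z(\C)$) divided by its dimension, and the relevant formal codegree of that character of $K(\mathcal Z(\C))$ works out to $\alpha_\rho\dim\C$ (not $\alpha_\rho$), so that the integral element $z$-eigenvalue gives exactly $(\dim\C/\alpha_\rho)\rho_Z(X)/\dim X$ as an algebraic integer. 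Equivalently, one applies the general principle ``formal codegree $\times$ normalized character value is an algebraic integer'' inside $K(\mathcal Z(\C))$, restricted along the braided embedding from $K(\C)$.

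The main obstacle I anticipate is the bookkeeping of normalizations: matching the ribbon/spherical dimension conventions in $\C$ versus $\mathcal Z(\C)$, keeping track of the factor $\dim\C$ that relates the two, and verifying that the character of $K(\mathcal Z(\C))$ in play is genuinely the one attached to the simple object $Z$ with the claimed formal codegree. A secondary subtlety is that $\C\to\mathcal Z(\C)$ need not be injective, so one should work with the character of $K(\mathcal Z(\C))$ directly and merely evaluate it on classes of objects pushed forward from $\C$; integrality is then inherited since those classes are $\ZZ$-combinations of simples of $\mathcal Z(\C)$, and the $z_\rho$-eigenvalue argument applies verbatim to the based ring $K(\mathcal Z(\C))$.
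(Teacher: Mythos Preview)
Your high-level strategy --- pass to the Drinfeld center via the braiding and extract integrality from the modular data of $\mathcal Z(\C)$ --- is exactly what the paper does, but your execution has a real gap. The paper observes that in a ribbon category $\rho(X)=\bold s_{Z_\rho,\iota(X)}/\dim Z_\rho$ (where $\iota:\C\hookrightarrow\mathcal Z(\C)$ is the braided embedding), so that
\[
\lambda_0(\rho,X)=\frac{\bold s_{Z_\rho,\iota(X)}}{\dim X}=\chi_{\iota(X)}(Z_\rho),
\]
the value at $Z_\rho$ of the character of $K(\mathcal Z(\C))$ labeled by the \emph{simple} object $\iota(X)$. This is an eigenvalue of the integer matrix $N_{Z_\rho}$ by the Verlinde formula, hence an algebraic integer. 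No formal codegrees are needed.

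You instead look at the character labeled by $Z_\rho$, evaluated at $\iota(X)$; that only recovers $\rho(X)$, and your attempt to repair this with a ``formal codegree $\times$ normalized character value'' principle does not work. First, the formal codegree of $\chi_{Z_\rho}$ in $K(\mathcal Z(\C))$ is $(\dim\C)^2/(\dim Z_\rho)^2=\alpha_\rho^2$, not $\alpha_\rho\dim\C$. Second, even with the correct value, the quantity your principle produces is $\dim Z_\rho\cdot\lambda_0(\rho,X)$, not $\lambda_0(\rho,X)$ itself, so the argument as written only yields a strictly weaker conclusion. Finally, your worry that $\iota$ ``need not be injective'' is misplaced and actually obscures the key point: the braided functor $\C\to\mathcal Z(\C)$, $X\mapsto (X,c_{-,X})$, is fully faithful by naturality of the braiding, so $\iota(X)$ is simple. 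That simplicity is precisely what lets you use the character $\chi_{\iota(X)}$ and finish in one line.
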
 

\begin{proof} In a ribbon category $\C$ we have $\rho(X)=\frac{\bold s_{Z_\rho X}}{\dim Z_\rho}$. 
where $(\bold s_{ij})$ is the $S$-matrix of the Drinfeld center $\mathcal Z(\C)$. Thus $$
\lambda_0(\rho,X)=\frac{\bold s_{Z_\rho X}}{\dim X},
$$ 
which is an algebraic integer (an eigenvalue of an integer matrix) by the Verlinde formula, see \cite{EGNO}, Corollary 8.14.4. 
\end{proof} 

In \cite{LPR} Liu, Palcoux and Ren conjectured that any spherical pseudo-unitary fusion category is Isaacs (Conjecture 2.5 in loc. cit.). However, this conjecture was disproved 
in \cite{BP}, where it is shown that the extended Haagerup fusion category is not Isaacs. 
In fact, they show that this category is $s$-Isaacs iff $s\ge 1$. 

Recall that $\C$ is said to be {\bf Frobenius type} if dimensions of its simple objects divide $\dim\C$. The Kaplansky 6th conjecture for fusion categories 
states that any spherical fusion category is Frobenius type. Generalizing, 
let us say that a category $\C$ is $s$-{\bf Frobenius type} for a rational number $s\ge 1$ if the dimensions of its simple objects divide $(\dim \C)^s$; so $1$-Frobenius type is Frobenius type in the usual sense. 
Clearly, $s$-Frobenius type implies $t$-Frobenius type for any 
$t>s$. Finally, it is clear that any Frobenius type category $\C$ is $1$-Isaacs.

This motivates the following weakening of Kaplansky 6-th conjecture for fusion categories. 

\begin{conjecture} Any spherical fusion category is 1-Isaacs. 
\end{conjecture}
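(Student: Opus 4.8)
The plan is to reduce the conjecture to its strongest special case, $s=0$ (the Isaacs property itself), and then to attack that case inside the modular category $\mathcal Z(\C)$. For the reduction, note first that the three numbers $\dim Z_\rho$, $\rho(X)$, $\dim X$ are all algebraic integers: $\rho(X)$ and $\dim X$ are eigenvalues of the integer fusion matrices $N_X$ acting on $A$ (using that $X\mapsto\dim X$ is a character of $A$ by sphericity), and $\dim Z_\rho$ is an eigenvalue of a fusion matrix of $\mathcal Z(\C)$. Hence the content of the $0$-Isaacs property is a divisibility, $\dim X\mid\dim Z_\rho\cdot\rho(X)$ in $\overline{\ZZ}$. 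Granting it, $s$-Isaacs follows for every rational $s\ge 0$: by Theorem \ref{os} one has $\lambda_s(\rho,X)=\alpha_\rho^{\,s}\,\lambda_0(\rho,X)$, where $\alpha_\rho=\dim\C/\dim Z_\rho$ is a positive algebraic integer (Section~3; equivalently, the divisibility $\dim Z_\rho\mid\dim\C$ quoted in the text), and every nonnegative rational power of a nonzero algebraic integer is again an algebraic integer, since $\alpha_\rho^{p/q}$ is a root of the monic polynomial $t^q-\alpha_\rho^{\,p}$, whose coefficients are algebraic integers. In the classical case $\C=\Rep(G)$ the divisibility $\dim X\mid\dim Z_\rho\cdot\rho(X)$ is exactly the statement that a class sum $\sum_{g\in c}g\in\ZZ[G]$ acts on the irreducible $X$ by the algebraic-integer scalar $|c|\,\chi_X(g)/\chi_X(1)=\lambda_0(\rho_c,X)$, which is proved there using the integral structure of the center of $\ZZ[G]$; the aim is to find a fusion-categorical substitute for that integral structure.

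The model to imitate is the ribbon case settled in the proposition above. There $\rho(X)=\mathbf{s}_{Z_\rho X}/\dim Z_\rho$, with $\mathbf{s}$ the $S$-matrix of $\mathcal Z(\C)$ and $X$ viewed in $\mathcal Z(\C)$ via its braiding, so $\lambda_0(\rho,X)=\mathbf{s}_{Z_\rho X}/\dim X$ is a Verlinde eigenvalue — an eigenvalue of the integer fusion matrix $N_{Z_\rho}$ of $\mathcal Z(\C)$ — hence an algebraic integer. For a general spherical $\C$ one unwinds the defining formula of Section~3: taking $Z=Z_\rho$ with $\dim V_Z=1$ and $v\in\Hom_\C(\one,Z)$, $f\in\Hom_\C(Z,\one)$ normalized by $f\circ v=1_\one$, the scalar $\rho_Z(X)={\rm Tr}\bigl((1_X\otimes f)\circ c_{X,Z}\circ(v\otimes 1_X)\bigr)$ is the value of a Hopf-link-type diagram in $\mathcal Z(\C)$ — legitimate because $Z$ carries a half-braiding with every object of $\C$ — linking the $X$-loop with the $Z$-strand, but with the rank-one idempotent $p:=v\circ f\in\End_\C(Z)$ inserted on the $Z$-strand. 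Thus $\lambda_0(\rho,X)$ is $\dim Z_\rho/\dim X$ times this $p$-decorated Hopf link, and the goal is to prove it lies in $\overline{\ZZ}$. Two complementary routes suggest themselves: (a) expand the $p$-decorated link in a basis adapted to $\mathcal Z(\C)$ — equivalently, relate $\rho_Z(X)$ to honest $S$-entries of $\mathcal Z(\C)$ by applying the induction functor $I\colon\C\to\mathcal Z(\C)$ to $X$ — so as to write $\lambda_0(\rho,X)$ as a $\ZZ$-linear combination, with no leftover denominator, of Verlinde eigenvalues of $\mathcal Z(\C)$; (b) produce a genuine integral structure — a $\ZZ$-order, or a lattice stable under the integer-matrix action of $K_0(\mathcal Z(\C))$ on the module category $\C$ — on which the operator encoding $\lambda_0(\rho,X)$ acts by an integer matrix, directly imitating the center-of-$\ZZ[G]$ argument.

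The reason the conjecture is still open, even for $s=0$, is precisely the main obstacle to both routes: the idempotent $p=v\circ f$ is \emph{not central}, since the subobject $\one\subseteq F(Z_\rho)$ of $\C$ is in general not a subobject of $Z_\rho$ in $\mathcal Z(\C)$ (it is exactly when $Z_\rho\cong\one$). So the $p$-decorated Hopf link does not split into a sum of honest $S$-entries of $\mathcal Z(\C)$, and the normalization that route (a) would have to divide out — essentially the categorical dimension of the $\one$-isotypic part of $Z_\rho$ — is not under integral control; route (b) meanwhile runs into the absence of canonical $\ZZ$-models for fusion categories over $\CC$. A uniform proof seems to need genuinely new input. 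Among accessible further cases, group-theoretical $\C$ looks the most promising, since there the simple objects and the characters of $A$ admit explicit descriptions in group data and one can hope to reduce to a twisted form of the classical Isaacs theorem; but already this requires controlling exactly the ``decoration'' above. I would expect the crux of any general argument to be a clean formula expressing $\dim Z_\rho\cdot\rho_{Z_\rho}(X)$ as a combination of fusion multiplicities of $\mathcal Z(\C)$ that is $\ZZ$-integral term by term.
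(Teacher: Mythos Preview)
The statement you were asked to prove is labeled \emph{Conjecture} in the paper, and the paper offers no proof of it; it only records the easy reduction that $s$-Isaacs implies $t$-Isaacs for $t>s$ (so the case $s=0$ is the strongest), proves the ribbon case as a proposition, and then leaves the general spherical case open. There is therefore nothing in the paper to compare your argument against.

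Your write-up is not a proof and you do not pretend otherwise: you correctly carry out the reduction to $s=0$ via $\lambda_s(\rho,X)=\alpha_\rho^{\,s}\lambda_0(\rho,X)$ and the fact that a nonnegative rational power of the algebraic integer $\alpha_\rho$ is again an algebraic integer (this is exactly the paper's one-line remark following Definition~\ref{pdef}), and you then give an honest discussion of why the $0$-Isaacs case resists the Verlinde argument once $\C$ is merely spherical---the idempotent $p=v\circ f$ on $Z_\rho$ is not central, so the decorated Hopf link does not resolve into genuine $S$-matrix entries of $\mathcal Z(\C)$. That diagnosis is accurate, and your two proposed routes are reasonable research directions, but neither is carried to a conclusion. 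In short: your reduction step is correct and matches the paper, your analysis of the obstruction is sound, but no proof of the conjecture exists here or in the paper, and you should present this as a discussion of an open problem rather than as a proof proposal.
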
 

The following proposition  generalizes \cite{LPR}, Proposition 2.6.

\begin{proposition} If $\C$ is $s$-Isaacs for $s\ge \frac{1}{2}$ 
then it is $s+\frac{1}{2}$-Frobenius type. In particular, 
if $\C$ is $\frac{1}{2}$-Isaacs then it is Frobenius type. 
\end{proposition}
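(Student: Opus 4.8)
The plan is to deduce $s+\frac12$-Frobenius type from $s$-Isaacs by a Galois-theoretic argument, exploiting the relation $\sum_\rho \alpha_\rho^{-1}\dim\rho=1$ together with the formula $\alpha_\rho=\dim\C/\dim Z_\rho$ from Theorem \ref{os}. First I would fix a simple object $X\in\C$ and consider the element $\dim X\in\overline{\QQ}$, which I want to show divides $(\dim\C)^{s+1/2}$, i.e. that $(\dim\C)^{s+1/2}/\dim X$ is an algebraic integer. The key identity to use is the ``second orthogonality'' relation for characters of the commutative algebra $A_\CC$: since $A_\CC$ is commutative and semisimple, its irreducible representations are the characters $\rho$, and one has $\sum_\rho \alpha_\rho^{-1}\rho(X)\overline{\rho(Y)}=(X,Y)=\delta_{XY}$ for basis elements, and in particular $\sum_\rho \alpha_\rho^{-1}|\rho(X)|^2=1$.

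Next, I would rewrite this using $\alpha_\rho^{-1}=\dim Z_\rho/\dim\C$, giving
$$
\sum_\rho \frac{\dim Z_\rho}{\dim\C}|\rho(X)|^2=1,
$$
hence
$$
(\dim X)^2=\sum_\rho \frac{\dim Z_\rho}{\dim\C}\,|\rho(X)|^2(\dim X)^2=\sum_\rho \frac{\dim Z_\rho}{\dim\C}\left|\frac{\rho(X)}{\dim X}\right|^2(\dim X)^4\cdot\frac{1}{(\dim X)^2},
$$
which is not quite the right packaging; instead the cleaner route is to multiply the orthogonality relation by $(\dim\C)^{2s+1}$ and factor the summand as $\lambda_s(\rho,X)\overline{\lambda_s(\rho,X)}$ times the remaining power of $\dim\C$ and $\dim Z_\rho$. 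Concretely, $|\lambda_s(\rho,X)|^2=(\dim\C)^{2s}(\dim Z_\rho)^{2-2s}|\rho(X)|^2/(\dim X)^2$, so
$$
\sum_\rho (\dim Z_\rho)^{2s-1}|\lambda_s(\rho,X)|^2=(\dim\C)^{-2s}(\dim X)^{-2}\sum_\rho \frac{\dim Z_\rho}{\dim\C}\cdot(\dim\C)^{2s+1}\cdot\frac{(\dim X)^2}{(\dim X)^2}\cdot\ldots
$$
Let me instead state the target cleanly: starting from $\sum_\rho \alpha_\rho^{-1}|\rho(X)|^2=1$ and substituting, one gets
$$
(\dim\C)^{2s+1}=(\dim X)^2\sum_\rho (\dim Z_\rho)^{2s-1}\,|\lambda_s(\rho,X)|^2,
$$
after checking the exponent bookkeeping. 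Here $\dim Z_\rho$ divides $\dim\C$ (so in particular is an algebraic integer), $2s-1\ge 0$ when $s\ge\frac12$ so $(\dim Z_\rho)^{2s-1}$ is an algebraic integer, and each $\lambda_s(\rho,X)$ is an algebraic integer by the $s$-Isaacs hypothesis; hence $|\lambda_s(\rho,X)|^2=\lambda_s(\rho,X)\overline{\lambda_s(\rho,X)}$ is an algebraic integer too, using that complex conjugation sends algebraic integers to algebraic integers and that $\overline{\lambda_s(\rho,X)}=\lambda_s(\overline\rho,X)$ up to the action of $\rho\mapsto\overline\rho$ (since $\dim Z_{\overline\rho}=\dim Z_\rho$). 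Therefore the right-hand sum $\sum_\rho(\dim Z_\rho)^{2s-1}|\lambda_s(\rho,X)|^2$ is an algebraic integer, and it equals $(\dim\C)^{2s+1}/(\dim X)^2$, so $(\dim X)^2$ divides $(\dim\C)^{2s+1}$ in the ring of algebraic integers, i.e. $(\dim\C)^{2s+1}/(\dim X)^2$ is an algebraic integer.

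Finally I would pass from divisibility of $(\dim X)^2$ into $(\dim\C)^{2s+1}$ to divisibility of $\dim X$ into $(\dim\C)^{s+1/2}$: if $\beta:=(\dim\C)^{2s+1}/(\dim X)^2$ is an algebraic integer, then $(\dim\C)^{s+1/2}/\dim X$ squares to $\beta$, hence is an algebraic number that is a root of the monic polynomial $t^2-\beta$ with algebraic-integer coefficients, hence is itself an algebraic integer. This is exactly the statement that $\dim X$ divides $(\dim\C)^{s+1/2}$, i.e. $\C$ is $s+\frac12$-Frobenius type. Specializing $s=\frac12$ gives that a $\frac12$-Isaacs category is $1$-Frobenius type, i.e. Frobenius type in the usual sense. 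The main obstacle I anticipate is purely one of bookkeeping: getting the exponents of $\dim\C$ and $\dim Z_\rho$ to line up correctly in the substitution, and making sure the step ``$2s-1\ge 0$'' is genuinely what forces the hypothesis $s\ge\frac12$ rather than an artifact; a secondary subtlety is justifying $\overline{\lambda_s(\rho,X)}=\lambda_s(\overline\rho,X)$, which rests on $\dim\C$ and $\dim Z_\rho$ being totally real (indeed equal to their complex conjugates, as formal codegrees and categorical dimensions of the center are) and on $\overline{\rho(X)}=\overline\rho(X)=\rho(X^*)$.
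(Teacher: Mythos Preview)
Your approach is correct and essentially identical to the paper's: both prove the key identity
\[
\sum_\rho (\dim Z_\rho)^{2s-1}\,\lambda_s(\rho,X)\,\lambda_s(\overline\rho,X)=\frac{(\dim\C)^{2s+1}}{(\dim X)^2}
\]
via the orthogonality relation $\sum_\rho \alpha_\rho^{-1}|\rho(X)|^2=1$, then observe that each summand is an algebraic integer when $2s-1\ge 0$. Your write-up adds an explicit justification of the square-root step $(\dim X)^2\mid(\dim\C)^{2s+1}\Rightarrow \dim X\mid(\dim\C)^{s+1/2}$, which the paper leaves implicit, and your opening mention of a ``Galois-theoretic argument'' is a slight misnomer since no Galois action is actually invoked.
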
 

\begin{proof} If $\C$ is $s$-Isaacs with $s\ge \frac{1}{2}$ then the number 
$$
\sum_{\rho} \lambda_s(\rho,X)\lambda_s(\overline{\rho},X)(\dim Z_{\rho})^{2s-1}=
\sum_\rho (\dim \C)^{2s}\dim Z_\rho\frac{\rho(X)\overline\rho(X)}{(\dim X)^2}=\frac{(\dim\C)^{2s+1}}{(\dim X)^2}
$$
is an algebraic integer, hence $\C$ is $s+\frac{1}{2}$-Frobenius type, as claimed. 
\end{proof} 

\subsection{Integrality properties of $I_n$} 

For any characters $\rho_i: A\to \Bbb C$, $i=1,...,n$, and a rational number $s\ge 0$ define the number
$$
J_{n,s}(\rho_1,...,\rho_n):=(\dim\C)^{(n-2)s}(\dim Z_{\rho_1}...\dim Z_{\rho_n})^{1-s}I_n(\rho_1,...,\rho_n). 
$$ 
For example, 
$$
J_{2,s}(\rho,\eta)=\dim\C(\dim Z_\rho)^{1-2s}\delta_{\eta,\overline\rho}.
$$

\begin{theorem} $\C$ is $s$-Isaacs if and only if 
for any $n\ge 2$ and $\rho_i$, $i=1,...,n$, the number $\frac{J_{n,s}(\rho_1,...,\rho_n)}{(\dim Z_{\rho_1}\dim Z_{\rho_2})^{1-s}}$ is an algebraic integer. 
\end{theorem}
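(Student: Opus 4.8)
The plan is to reduce the whole statement to the elementary identity
\[
\frac{J_{n,s}(\rho_1,\dots,\rho_n)}{(\dim Z_{\rho_1}\dim Z_{\rho_2})^{1-s}}=\sum_i \rho_1(X_i)\,\rho_2(X_i)\prod_{k=3}^{n}\lambda_s(\rho_k,X_i),
\]
which one obtains by inserting $I_n(\rho_1,\dots,\rho_n)=\sum_i(\dim X_i)^{2-n}\prod_k\rho_k(X_i)$ into the definition of $J_{n,s}$ and rewriting, for $k\ge 3$, $\rho_k(X_i)=\lambda_s(\rho_k,X_i)\,\dim X_i\,/\,\big((\dim\C)^s(\dim Z_{\rho_k})^{1-s}\big)$; then all powers of $\dim\C$ and of $\dim Z_{\rho_3},\dots,\dim Z_{\rho_n}$ cancel, and $(\dim X_i)^{n-2}$ cancels against $(\dim X_i)^{2-n}$. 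Granting this, the ``only if'' direction is immediate: $\rho_1(X_i)$ and $\rho_2(X_i)$ are algebraic integers (eigenvalues of the integer fusion matrices), and each $\lambda_s(\rho_k,X_i)$ is an algebraic integer by the $s$-Isaacs hypothesis, so the right-hand side is an algebraic integer.

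For the ``if'' direction I would fix a number field $K$ containing all the numbers $\rho(X_i)$, $\dim X_i$, $\dim Z_\rho$, $(\dim\C)^s$, $(\dim Z_\rho)^s$, so that in particular every $\lambda_s(\rho,X_i)$ and every invariant $J_{n,s}(\rho_1,\dots,\rho_n)/(\dim Z_{\rho_1}\dim Z_{\rho_2})^{1-s}$ lies in $K$, and work in $K^N$, where $N$ is the number of basis elements of $A$, equivalently the number of characters of $A_\CC$. Give $K^N$ the standard pairing $\langle u,v\rangle=\sum_i u_iv_i$ and the Hadamard product $\odot$. For a character $\sigma$ put $\ell^\sigma:=(\lambda_s(\sigma,X_i))_i$ and for characters $\rho_1,\rho_2$ put $w^{\rho_1,\rho_2}:=(\rho_1(X_i)\rho_2(X_i))_i$; let $W\subseteq K^N$ be the $\mathcal{O}_K$-module generated by all $w^{\rho_1,\rho_2}$ and $L\subseteq K^N$ the $\mathcal{O}_K$-module generated by all Hadamard products $\ell^{\sigma_1}\odot\cdots\odot\ell^{\sigma_m}$ ($m\ge 0$, the empty product being $\mathbf 1=(1,\dots,1)$). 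The displayed identity says that $\langle w^{\rho_1,\rho_2},\ell^{\rho_3}\odot\cdots\odot\ell^{\rho_n}\rangle$ equals the Liu--Palcoux--Wu invariant, which lies in $\mathcal{O}_K$ by hypothesis; extending $\mathcal{O}_K$-bilinearly, $\langle w,v\rangle\in\mathcal{O}_K$ for all $w\in W$ and $v\in L$, i.e.\ $L\subseteq W^\vee:=\{v\in K^N:\langle w,v\rangle\in\mathcal{O}_K\ \forall w\in W\}$.

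The crucial point is that $W$ spans $K^N$. Since $A_\CC\cong\CC^N$, the matrix $(\rho(X_i))_{\rho,i}$ is invertible, so the vectors $u^\rho:=(\rho(X_i))_i$ form a $K$-basis of $K^N$; writing a standard basis vector $\delta_i=\sum_\rho a_\rho u^\rho$ (with $a_\rho\in K$ depending on $i$) and using the idempotent relation $\delta_i=\delta_i\odot\delta_i=\sum_{\rho,\sigma}a_\rho a_\sigma\,(u^\rho\odot u^\sigma)=\sum_{\rho,\sigma}a_\rho a_\sigma\,w^{\rho,\sigma}$ exhibits each $\delta_i$ in the $K$-span of $W$. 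Hence $W$ contains a free $\mathcal{O}_K$-submodule of rank $N$, so $W^\vee$ embeds in the $\mathcal{O}_K$-dual of that submodule, which is free of rank $N$; as $\mathcal{O}_K$ is Noetherian, $W^\vee$, and therefore its submodule $L$, is a finitely generated $\mathcal{O}_K$-module. Since $L$ contains $\mathbf 1$ and is stable under $\odot$-multiplication by each $\ell^\sigma$, it is a faithful module, finitely generated over $\mathcal{O}_K$, over the commutative subring $R:=\mathcal{O}_K[\ell^\sigma:\sigma]$ of $(K^N,\odot)$; by the Cayley--Hamilton (determinant) argument $R$ is integral over $\mathcal{O}_K$, so each $\ell^\sigma$ satisfies a monic polynomial over $\mathcal{O}_K$, and reading this coordinatewise shows that every $\lambda_s(\sigma,X_i)$ is an algebraic integer---which is the $s$-Isaacs property.

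The hard part is this last direction, and within it the two things that must be handled with care: proving that the character-value products $w^{\rho_1,\rho_2}$ really span $K^N$ (this is exactly what prevents the invariants from only constraining a proper subspace of the $\lambda$-vectors), and arranging the concluding argument inside the Noetherian ring $\mathcal{O}_K$ rather than the non-Noetherian ring of all algebraic integers, where ``a submodule of a finitely generated module is finitely generated'' would fail.
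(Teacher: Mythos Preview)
Your ``only if'' direction coincides with the paper's: both reduce to the identity
\[
\frac{J_{n,s}(\rho_1,\dots,\rho_n)}{(\dim Z_{\rho_1}\dim Z_{\rho_2})^{1-s}}=\sum_X \rho_1(X)\rho_2(X)\,\lambda_s(\rho_3,X)\cdots\lambda_s(\rho_n,X).
\]

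For the ``if'' direction your argument is correct but takes a genuinely different and considerably longer route than the paper's. The paper simply specializes $\rho_1=\cdots=\rho_{n-1}=\rho$, $\rho_n=\eta$, and then sums over $\eta$ with weight $(\dim Z_\eta)^{s}\,\overline\eta(Y)$. Using the orthogonality relation $\sum_\eta \dim Z_\eta\,\eta(X)\overline\eta(Y)=\dim\C\cdot\delta_{X,Y}$ (equivalent to $\tau=\sum_\eta\alpha_\eta^{-1}\eta$ together with $\dim Z_\eta=\dim\C/\alpha_\eta$), the inner sum collapses to a single term and one obtains that
\[
\dim\C\,(\dim Z_\rho)^{s-1}\rho(Y)\,\lambda_s(\rho,Y)^{n-2}
\]
is an algebraic integer for every $n\ge 3$; varying $n$ then forces $\lambda_s(\rho,Y)$ to be integral by the usual $c x^m\in\mathcal{O}_K$-for-all-$m$ argument. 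So the paper's Fourier-inversion trick replaces your entire $W$, $W^\vee$, $L$, Cayley--Hamilton machinery with one line of computation. What your approach buys is that it never invokes the orthogonality of characters and would go through verbatim in any setting where the products $\rho_1(X_i)\rho_2(X_i)$ span $K^N$; the paper's approach buys brevity and makes the dependence on the character theory of $A_\CC$ explicit.
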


\begin{proof}
We have 
$$
\frac{J_{n,s}(\rho_1,...,\rho_n)}{(\dim Z_{\rho_1}\dim Z_{\rho_2})^{1-s}}=
\sum_X \rho_1(X)\rho_2(X)\lambda_{s}(\rho_3,X)...\lambda_{s}(\rho_n,X),
$$
which proves the ``only if" part. For the ``if" part, consider the sum 
$$
\sum_\eta (\dim Z_\eta)^{s}\frac{J_{n,s}(\rho,...,\rho,\eta)}{(\dim Z_\rho)^{2(1-s)}}\overline\eta(Y)=
$$
$$
=(\dim \C)^{(n-2)s+1}(\dim Z_\rho)^{(n-3)(1-s)}\frac{\rho(Y)^{n-1}}{\dim(Y)^{n-2}}=
$$
$$
\dim \C(\dim Z_\rho)^{s-1}\rho(Y)
\lambda_s(\rho,Y)^{n-2}.
$$
This is an algebraic integer for all $n$, hence $\lambda_s(\rho,Y)$ is an algebraic integer, as claimed. 
\end{proof} 

\begin{definition} Let us say that $\C$ is {\bf strongly Isaacs} if the number 
$\frac{J_{n,0}(\rho_1,...,\rho_n)}{\dim\C\sqrt{\dim Z_{\rho_1}\dim Z_{\rho_2}}}$ is an algebraic integer. 
\end{definition} 

It is easy to see that if $\C$ is strongly Isaacs then it is ($0$-)Isaacs. 

\begin{theorem}   
(i) The category $\C={\rm Rep}(G)$ for a finite group $G$ is strongly Isaacs. 

(ii) Any modular category is strongly Isaacs. 
\end{theorem}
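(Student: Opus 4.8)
The plan is to reduce the statement, in each of the two cases, to a classical combinatorial identity plus a divisibility observation. The common starting point is the identity $J_{n,0}(\rho_1,\dots,\rho_n)=\big(\prod_k\dim Z_{\rho_k}\big)\,I_n(\rho_1,\dots,\rho_n)$ together with $\dim Z_\rho=\dim\C/\alpha_\rho$, where $\alpha_\rho$ is the formal codegree; in both cases I would compute the codegrees directly by plugging $a=X_i$ into the trace formula $\tau(a)=\sum_\rho\alpha_\rho^{-1}\mathrm{Tr}\,\rho(a)$.

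\textbf{Part (i).} For $\C=\mathrm{Rep}(G)$ the characters of $A_{\Bbb C}$ are the evaluations $\rho_C:\chi\mapsto\chi(g_C)$ at conjugacy classes $C$ (representative $g_C$), and $\dim\C=|G|$. A one-line computation with the trace formula (comparing $\tau(\chi)=\langle\chi,\one\rangle=\frac1{|G|}\sum_g\chi(g)=\sum_C\frac{|C|}{|G|}\rho_C(\chi)$) gives $\alpha_{\rho_C}=|C_G(g_C)|$, hence $\dim Z_{\rho_C}=|C|$. Therefore
\[
J_{n,0}(\rho_{C_1},\dots,\rho_{C_n})=|C_1|\cdots|C_n|\sum_{\chi\in\mathrm{Irr}(G)}\frac{\chi(g_{C_1})\cdots\chi(g_{C_n})}{\chi(1)^{n-2}}=|G|\cdot N,
\]
where $N=N(C_1,\dots,C_n)$ is the number of tuples $(x_1,\dots,x_n)\in C_1\times\cdots\times C_n$ with $x_1\cdots x_n=1$ --- this is exactly Frobenius's class-equation formula. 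Thus the quantity to be shown integral is $N/\sqrt{|C_1||C_2|}$. Here one must resist concluding by rationality, since this number is generally irrational; instead I would note that $G$ acts on the solution set by simultaneous conjugation, so for fixed $x_1\in C_1$ the number of completions $(x_2,\dots,x_n)$ is independent of $x_1$, giving $|C_1|\mid N$, and symmetrically $|C_2|\mid N$. Writing $N=|C_1|n_1=|C_2|n_2$ with $n_1,n_2\in\Bbb Z_{\ge0}$ yields $N^2=|C_1||C_2|\,n_1n_2$, so $N/\sqrt{|C_1||C_2|}=\sqrt{n_1n_2}$ is an algebraic integer.

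\textbf{Part (ii).} For a modular category the characters of $A_{\Bbb C}$ are the (normalized) columns of the $S$-matrix: $\rho_a(X_i)=\tilde s_{ia}/d_a$, with $a$ running over simple objects and $\tilde s$ the unnormalized $S$-matrix ($\tilde s_{0i}=d_i$, $\tilde s^2=(\dim\C)\,C$ with $C$ the charge-conjugation matrix); these are all the characters since $A_{\Bbb C}\cong\Bbb C^r$ and $\tilde s$ is invertible. A short computation with the trace formula and the relation $\tilde s^2=(\dim\C)\,C$ gives $\alpha_{\rho_a}=\dim\C/d_a^2$, hence $\dim Z_{\rho_a}=d_a^2$ (consistent with $\mathcal Z(\C)\simeq\C\boxtimes\mathcal C^{\mathrm{rev}}$, the object attached to $\rho_a$ being $X_a\boxtimes X_{a^*}$); in particular $d_a^2=\dim Z_{\rho_a}>0$ by Theorem \ref{os}, so every $d_a$ is real. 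By the Verlinde formula, $\sum_i(\dim X_i)^{-(n-2)}\prod_k\tilde s_{ia_k}=(\dim\C)\,M$ with $M=\dim\Hom_\C(\one,X_{a_1}\otimes\cdots\otimes X_{a_n})\in\Bbb Z_{\ge0}$, so $I_n(\rho_{a_1},\dots,\rho_{a_n})=(\dim\C)\,M/\prod_k d_{a_k}$ and $J_{n,0}(\rho_{a_1},\dots,\rho_{a_n})=\big(\prod_k d_{a_k}^2\big)I_n=(\dim\C)\,M\prod_k d_{a_k}$. Consequently
\[
\frac{J_{n,0}(\rho_{a_1},\dots,\rho_{a_n})}{\dim\C\,\sqrt{\dim Z_{\rho_{a_1}}\dim Z_{\rho_{a_2}}}}=\frac{M\prod_k d_{a_k}}{|d_{a_1}d_{a_2}|}=\pm\,M\,d_{a_3}\cdots d_{a_n},
\]
which is an algebraic integer because $M\in\Bbb Z$ and each categorical dimension $d_{a_k}$ is an algebraic integer (for $n=2$ the product is empty and the value is $\pm\delta_{a_1,a_2^*}$, matching the stated formula for $J_{2,0}$).

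\textbf{Expected obstacle.} In (i) the only non-formal point is recognizing that algebraic-integrality is here a genuine divisibility statement $|C_1||C_2|\mid N^2$, rather than a consequence of $J_{n,0}/\dim\C$ being an integer; supplying $|C_i|\mid N$ from the conjugation action is the crux. In (ii) everything is mechanical once two inputs are fixed correctly: the normalization of $\tilde s$ making $\dim Z_{\rho_a}=d_a^2$, and the Verlinde formula applied in the $\dim\Hom(\one,\bigotimes_k X_{a_k})$ form --- convention errors in these are the main risk.
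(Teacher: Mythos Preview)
Your proof is correct and follows essentially the same route as the paper. In part (i) both arguments identify $J_{n,0}/\dim\C$ with the Frobenius count $|S|$ of factorizations and then use the conjugation action to obtain $|C_i|\mid |S|$; the paper phrases this via orbit--stabilizer (stabilizers lie in $C_G(g_{C_i})$, so each orbit has size divisible by $|C_i|$), while you phrase it as ``the number of completions is independent of the choice of $x_1\in C_1$'', but these are the same observation. In part (ii) both arguments compute $\dim Z_{\rho_a}=d_a^2$ and reduce, via Verlinde, to $J_{n,0}/\dim\C=M\prod_k d_{a_k}$ with $M\in\Bbb Z_{\ge0}$; your extra care with the sign $\sqrt{d_{a_1}^2 d_{a_2}^2}=|d_{a_1}d_{a_2}|$ and the appeal to $\dim Z_{\rho_a}>0$ to get $d_a\in\Bbb R$ are nice clarifications that the paper leaves implicit.
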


\begin{proof} 
(i) The representations $\rho_i$ correspond to the conjugacy classes 
$C_i$ in $G$, and $Z_{\rho_i}={\rm Fun}(C_i)$, so $\dim Z_{\rho_i}=|C_i|$. 
Also it is well known that in this case 
\begin{equation}\label{Jinv}
\frac{J_{n,0}(\rho_1,...,\rho_n)}{\dim\C}=\frac{\dim Z_{\rho_1}...\dim Z_{\rho_n}}{\dim \C}I_n(\rho_1,...,\rho_n)=
|S|,
\end{equation} 
where $S$ is the set of tuples 
$(g_1,...,g_n)\in G^n$ such that $g_1...g_n=1$ and $g_i\in C_i$. 
The set $S$ carries an action of $G$ by conjugation, and the stabilizer 
of every element is contained in the centralizer $G_i$ of the class $C_i$ for all $i$, so 
the size of each orbit is divisible by $|C_i|$. This implies that 
\eqref{Jinv} is an integer and divisible by $\dim Z_{\rho_i}$ for all $i$, hence by $\sqrt{\dim Z_{\rho_1}\dim Z_{\rho_2}}$, which yields the desired statement.

(ii) Each $\rho_j$ corresponds to a simple object $X_j$ of $\C$. A direct calculation using the Verlinde formula yields 
$$
\frac{J_{n,0}(\rho_1,...,\rho_n)}{\dim \C}=\dim X_1...\dim X_n\dim\Hom(\one, X_1\otimes...\otimes X_n).
$$
Note also that $Z_{\rho_j}=X_j\boxtimes X_j^*$, so $\dim Z_{\rho_j}=(\dim X_j)^2$. This implies the statement.  
\end{proof} 

In the original version of this article we asked the following question.

\begin{question} Is any spherical fusion category strongly Isaacs? 
Is it true at least for ribbon categories?\footnote{S. Palcoux has informed us that he checked many examples of commutative integral fusion rings which are perfect (i.e. have no invertible basis elements other than the unit) and pass the extended cyclotomic criterion (see \cite{LPR2}, 5.4), and all the Isaacs ones turned out to be strongly Isaacs.} 
\end{question}

As mentioned above, the first part of this question was answered negatively in \cite{BP}, a counterexample being the extended Haagerup category. However, the second part of the question (regarding ribbon categories) remains open.

\end{document}